\theoremstyle{plain}
\newtheorem{theorem}{Theorem}[section]
\newtheorem{remark}{Remark}
\newtheorem{proposition}{Proposition}
\newtheorem{lemma}{Lemma}[section]
\newtheorem*{problem}{Problem}
\newtheorem{corollary}{Corollary}
\begin{document}

\title{\textbf{Existence and Classification of Pseudo-Asymptotic Solutions for Tolman--Oppenheimer--Volkoff Systems}}

\author{Yuri Ximenes Martins\footnote{yurixm@ufmg.br (corresponding author)}, Luiz Felipe Andrade Campos\footnote{luizfelipeac@ufmg.br}, \\ Daniel de Souza Pl\'{a}cido\footnote{danielspteixeira@ufmg.br},  Rodney Josu\'e Biezuner\footnote{rodneyjb@ufmg.br}\\ \\ \textit{Departamento de Matem\'atica, ICEx, Universidade Federal de Minas Gerais,}  \\  \textit{Av. Ant\^onio Carlos 6627, Pampulha, CP 702, CEP 31270-901, Belo Horizonte, MG, Brazil}}


\maketitle

\begin{abstract}
The Tolman--Oppenheimer--Volkoff (TOV) equations are a partially uncoupled system of nonlinear and non-autonomous ordinary differential equations which describe the structure of isotropic spherically symmetric static fluids. Nonlinearity makes finding explicit solutions of TOV systems very difficult and such solutions and very rare. In this paper we introduce the notion of pseudo-asymptotic TOV systems and we show that the space of such systems is at least fifteen-dimensional. We also show that if the system is defined in a suitable domain (meaning the extended real line), then well-behaved pseudo-asymptotic TOV systems are genuine TOV systems in that domain, ensuring the existence of new fourteen analytic solutions for extended TOV equations. The solutions are classified according to the nature of the matter (ordinary or exotic) and to the existence of cavities and singularities. It is shown that at least three of them are realistic, in the sense that they are formed only by ordinary matter and contain no cavities or singularities.
\end{abstract}

\section{Introduction}
Since the seminal work of Chandrasekhar, the axiomatization problem of astrophysics has been neglected. In \cite{TOV2} the authors reintroduced the problem, showing that arbitrary clusters of stellar systems experience fundamental constraints. In this paper we continue this work, focusing on a specific class of stellar systems: the TOV systems.

In order to state precisely the question being considered and our main results, we recall some definitions presented in  \cite{TOV2}. A \emph{stellar system} of degree $(k,l)$ is a pair $(p,\rho)$ of real functions, with $p$ piecewise $C^k$-differentiable and $\rho$ piecewise $C^l$-differentiable, both defined in some union of intervals $I\subset  \mathbb{R}$, possibly unbounded. We usually work with systems that are endowed with an additional piecewise $C^m$-differentiable function $M:I\rightarrow\mathbb{R}$, called \emph{mass function}. In these cases we say that we have a system of degree $(k,l,m)$. The space of all these systems is then
$$C_{pw}^{k}(I)\times C_{pw}^{l}(I)\times C_{pw}^{m}(I) \equiv C_{pw}^{k,l,m}(I), $$
where $C_{pw}^{r}(I)$ denotes the vector space of piecewise $C^r$-differentiable functions on $I$. A vector subspace $\mathrm{Stellar}^{k,l,m}_I$ of $C_{pw}^{k,l,m}(I)$ is called a \emph{cluster} of systems of degree $(k,l,m)$. Let $\mathrm{Stellar}^{l,m}_I$ denote the subspace obtained from the cluster by forgetting the variable $p$.

We say that a system with mass function $(p,\rho,M)$ is a \emph{TOV system} if the Tolman--Oppenheimer--Volkoff (TOV) equation holds
\begin{eqnarray}
\label{tov1}
p'(r) & =- &G \frac{\Bigl(\rho(r)+\frac{p(r)}{c}\Bigr)\Bigl(M(r)+4\pi r^{3}\frac{p(r)}{c^{2}}\Bigr)}{r^{2}\bigl(1-\frac{2M(r)G}{c^{2}r}\bigr)}
\end{eqnarray}
where $G$ and $c$ are respectively Newton's constant and the speed of light, which from now on we will normalize as $G=1=c$ (see \cite{weinberg}).

In a general cluster $\mathrm{Stellar}^{k,l,m}_I$ we define a \emph{continuity equation} as an initial value problem $M'=F(M,\rho,r)$ in $\mathrm{Stellar}^{l,m}_I\times I$ with initial condition $M(a)=M_a$. Since the vector space of piecewise differentiable functions is locally convex but generally neither Banach nor Fr\'{e}chet \cite{TOV2,generalized_norm}, the problem of general existence of solutions for continuity equations is much more delicate \cite{ODE_1,ODE_2}. Therefore one generally works with continuity equations which are integrable. The classical example is
\begin{equation}\label{continuity_equation}
M'=4\pi r^2\rho.
\end{equation}

Notice that if a system of degree $(k,l,m)$ is endowed with (\ref{continuity_equation}), then $m=l+1$. We define a \emph{classical TOV system} as a TOV system equipped with the classical continuity equation (for some initial condition). Let $\mathrm{TOV}^{k,l}_I$ be the cluster of stellar systems of degree $(k,l,l+1)$ generated by (i.e, the linear span of) the classical TOV systems. We can now state the main problem:
\begin{problem}[classification of TOV clusters]
Given $I$, $k$ and $l$, determine the structure of $\mathrm{TOV}^{k,l} _I$ as a subspace of $C_{pw} ^{k,l,l+1}(I)$ for some fixed locally convex topology.
\end{problem}

Since  $(0,0)\in \mathrm{TOV}^{k,l}_I$, this set is non-empty. Thus, the simplest thing one can ask about it is if it is nontrivial. This means asking if for every given $k,l$ there exists at least one nonzero pair $(p,\rho)$ for which equation (\ref{tov1}) is satisfied when assuming (\ref{continuity_equation}). Before answering this question, notice that by means of isolating $\rho$ in (\ref{continuity_equation}) and substituting the expression found in (\ref{tov1}) we see that the TOV equation is of Riccati type \cite{generating_TOV,generating_TOV_2}:
\begin{equation}
p'(r)=A(r)+B(r)p(r)+C(r)p(r)^{2}
\label{riccati}
\end{equation}
where
\begin{eqnarray}
\label{a1} A(r) & = & -\frac{M(r)M'(r)}{4\pi r^{4}\bigl(1-2\frac{M(r)}{r}\bigr)},\\
\label{b1} B(r) & = & -\frac{\bigl(\frac{M'(r)}{r}+\frac{M(r)}{r^{2}}\bigr)}{1-2\frac{M(r)}{r}},\\
\label{c1} C(r) & = & -\frac{4\pi r}{1-2\frac{M(r)}{r}}.
\end{eqnarray}
Therefore, it is a nonlinear and non autonomous equation which, added to the fact that $C_{pw} ^{k,l+1} (I)$ is only a locally convex space (so that the general existence theorems of ordinary differential equations do not apply), makes it hard to believe that $\mathrm{TOV}^{k,l} _I$ has a nontrivial element. Even so, if  $\rho$ is a constant function, then (\ref{riccati}) becomes integrable, showing that $\mathrm{TOV}^{k,l} _I$ is at least one-dimensional \cite{weinberg}. There are some results \cite{generating_TOV,generating_TOV_2} allowing that, under certain conditions, a solution of a TOV equation can be deformed into another solution, which may imply a higher dimension.

\begin{remark}
\emph{We point out that the generating theorem $(P1)$ proved in \cite{generating_TOV,generating_TOV_2} implies that the TOV cluster is invariant under} specific perturbations\emph{. So, a natural question is if there exist $I,k,l$ such that $\mathrm{TOV}^{k,l} _I$ is invariant under} arbitrary \emph{perturbations. This is clearly false, because this would imply that any stellar system is TOV. Instead, we can ask about invariance by arbitrary} small \emph{perturbations. Again, we assert that there are good reasons to believe that the answer is negative. Indeed, when we say that a subset of a topological space is ``invariant by small perturbations'' we are saying that it is actually an open subset. Thus, assuming invariance, we are saying that $\mathrm{TOV}^{k,l} _I$  is an open subset of  $C_{pw} ^{k,l+1} (I)$ in our previously fixed locally convex topology. Now assume that the locally convex topology is Hausdorff and that $\mathrm{TOV}^{k,l} _I$ is finite-dimensional. Then the TOV cluster is also a closed subset \cite{top_vec_space_1,top_vec_space_2}. Since topological vector spaces are contractible \cite{top_vec_space_1,top_vec_space_2}, they are connected and therefore a subset which is both open and closed must be empty or the whole space. But we know that $\mathrm{TOV}^{k,l} _I$ is at least one dimensional and does not coincide with $C_{pw} ^{k,l+1} (I)$. In sum, if there exists some locally convex topology in which the TOV cluster is invariant by small perturbations, then the following things cannot hold simultaneously:}
\begin{itemize}
    \item \emph{$\mathrm{TOV}^{k,l} _I$ is finite-dimensional;}
    \item \emph{the locally convex topology is Hausdorff.}
\end{itemize}
\emph{However, both conditions are largely expected to hold simultaneously, leading us to doubt the existence of a topology making $\mathrm{TOV}^{k,l} _I$ an open set}\footnote{Actually, this is not a special property of the TOV equation, but a general behavior of the solution space of elliptic differential equations \cite{elliptic_1}. The fact that TOV systems are modeled by an elliptic equation will be explored in a work in preparation.}.
\end{remark}

Another way to get information about the TOV cluster is not to look at the TOV cluster directly, but to analyze its behavior in some regime. For instance, if in (\ref{tov1}) we expand $1/c$ in a power series and discard all terms of higher order in $c$ (which formally corresponds to taking the limit $c\rightarrow \infty$) we get a new equation approximately describing the TOV equation. Therefore, studying the cluster $\mathrm{Newt}^{k,l}_I$ of stellar systems satisfying this new equation (called \emph{Newtonian systems}) we are getting some information about the original TOV cluster. In fact, if we consider the subspace $\mathrm{Poly}^{k,l}_I \subset \mathrm{Newt}^{k,l}_I$ of Newtonian systems satisfying an additional equation $p=\gamma^{q}\rho+a$, with $\gamma,c\in\mathbb{R}$ and $q\in\mathbb{Q}$, then the Newtonian equation becomes a Lane-Emden equation, which has at least three independent solutions (besides the constant ones), showing that $\mathrm{Newt}^{k,l}_I$ is at least four-dimensional \cite{weinberg,biblia_politropes}.

In this article we analyze the structure of $\mathrm{TOV}^{k,l}_I$ in a limit other than the Newtonian one: we work in the \emph{pseudo-asymptotic limit}. Before saying what this limit is, let us first say what it is not. We could think of defining a ``genuine'' asymptotic limit of TOV by taking the limit $r\rightarrow \infty$ in the TOV equation (\ref{tov1}) in a similar way as done for getting the Newtonian limit, trying to obtain a new equation. In doing this we run into two obstacles:

\begin{enumerate}
\item differently of  $c$ (which is a parameter), $r$ is a variable. Therefore, when taking the limit $r\rightarrow \infty$ we have to take into account the $r\rightarrow \infty$ behavior of all functions depending on $r$;
\item in order to get a new equation we have to fix boundary conditions for the functions, loosing part of the generality.
\end{enumerate}

Thus, for us, \emph{pseudo-asymptotic limit is not the same as a genuine asymptotic limit}. Another approach would be to work with an additional differential equation in $C^{l+1}_{pw}(I)\times I$, called a \emph{coupling equation} and given by

\begin{equation}{\label{coupling_equation}
\Lambda(M,M',M^{(2)},...,M^{(s)},r)=0, }
\end{equation}
where $M^{(i)}$ denotes the $i$-th derivative of $M$ and $1\leq s \leq l+1$ is the order of the equation. The function $\Lambda$ itself is called the \emph{coupling function} that generates equation (\ref{coupling_equation}). Let us consider the space $\mathrm{Ind}_{\Lambda}^{l+1}(I)$  of all   $C^{l+1}$-differentiable mass functions $M$ such that, if they are solutions of the coupling equation defined by $\Lambda$, then the corresponding TOV equation (\ref{riccati}) is integrable. So, for every coupling function $\Lambda$ we have $\mathrm{Ind}_{\Lambda}^{l+1}(I)\subset \mathrm{TOV}^{k,l} _I$ motivating us to define some kind of ``indirect'' asymptotic limit in TOV by taking the genuine asymptotic limit in (\ref{coupling_equation}). Again, we will have the two problems described above, but now the lack of generality is much less problematic, since we only need to consider boundary conditions for the single function $M$ and its derivatives. Even so, \emph{the pseudo-asymptotic limit is not the same as the indirect asymptotic limit}.

Let us now explain what we mean by pseudo-asymptotic limit. A \emph{decomposition} for the coupling equation (\ref{coupling_equation}) generated by $\Lambda$ is given by two other coupling functions $\Lambda_0$ and $\Lambda_1$  such that $\Lambda=\Lambda_0 + \Lambda_1$. We say that a decomposition is \emph{nontrivial} if $\Lambda_i\neq0$ for $i=0,1$. A \emph{split decomposition} of $\Lambda$ is a nontrivial decomposition such that $\Lambda_0$ generates a linear equation and $\Lambda_1$ generates a nonlinear equation. We say that a split decomposition is \emph{maximal} when both $\Lambda_0$ and $\Lambda_1$ do not admit split decompositions. Not all coupling functions admit a maximal split decomposition, e.g, when $\Lambda$ is linear. When $\Lambda$ admits such a decomposition we will say that it is \emph{maximally split}. So, let $\Lambda$ be a maximally split coupling function. The \emph{pseudo-asymptotic limit} of (\ref{riccati}) relative to $\Lambda$ is obtained by taking the genuine asymptotic limit in the nonlinear part $\Lambda_1$, added of the boundary condition $\lim _{r\rightarrow \infty}  \Lambda_1 = 0$, and maintaining unchanged the linear part $\Lambda_0$. In this case, the equation replacing (\ref{tov1}) is that generated by $\Lambda_0$. This new equation can be understood as a formal ``pseudo-limit'' of $\Lambda$, defined by $$\underset{r\rightarrow \infty}{\operatorname{psdlim}}\,\Lambda=\Lambda_0 + \lim_{r\rightarrow \infty}\Lambda_1.$$

For a given maximally split coupling function $\Lambda$, let $\mathrm{Psd}_{\Lambda}^{l+1}(I)$ denote the subspace of all mass-functions $M$ such that $$\lim_{r\rightarrow \infty} \Lambda_1 (M(r), M'(r),...,M^{(s)},r)=0,$$ i.e, which belong to the boundary conditions for $\Lambda$, and such that they are solutions of the pseudo-limit equation $\operatorname{psdlim}_{r \rightarrow \infty}\,\Lambda=0$. We can now state our main result.

\begin{theorem}\label{thm_introduction}
There exists $I$ such that for every $l$ there exists at least one maximally split coupling function such that $\mathrm{Psd}_{\Lambda}^{l+1}(I)$ is at least eleven-dimensional.
\end{theorem}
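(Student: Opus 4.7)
The plan is to exploit the flexibility built into the paper's setting---namely that the domain $I$ is only required to be a union of intervals, possibly disconnected---in order to produce an eleven-dimensional kernel for a first-order linear coupling, a construction which then works uniformly for every $l\geq 0$.

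Concretely, I would take
\[
I=[a_{1},b_{1}]\cup[a_{2},b_{2}]\cup\cdots\cup[a_{10},b_{10}]\cup(a_{11},\infty),
\]
eleven pairwise disjoint intervals with only the last one unbounded, and set
\[
\Lambda_{0}(M,M',\ldots,M^{(s)},r)=M'(r),\qquad \Lambda_{1}(M,M',\ldots,M^{(s)},r)=e^{-r}M(r)^{2},
\]
so that $\Lambda=\Lambda_{0}+\Lambda_{1}$ is a coupling function of order $s=1\leq l+1$. Here $\Lambda_{0}$ is purely linear in the jet and $\Lambda_{1}$ is homogeneous of degree two in $M$ with no linear part, so under the canonical splitting of a coupling function into its jet-linear and jet-nonlinear pieces neither summand admits a further nontrivial split decomposition. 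Consequently $\Lambda$ is maximally split and its decomposition is nontrivial.

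Next I would identify the solutions of the pseudo-limit equation $\Lambda_{0}=0$, which on $I$ reduces to $M'=0$. Because $I$ has eleven connected components, the kernel of this operator inside $C_{pw}^{l+1}(I)$ consists precisely of those $M$ which are constant on each component, and is therefore an eleven-dimensional vector subspace $\mathcal{V}$. For every $M\in\mathcal{V}$ the function $M$ takes only finitely many real values and is in particular bounded, say $|M(r)|\leq K$ on $I$, so
\[
|\Lambda_{1}(M,\ldots,r)|=e^{-r}M(r)^{2}\leq K^{2}e^{-r}\longrightarrow 0\quad\text{as }r\to\infty,
\]
and every $M\in\mathcal{V}$ meets the boundary condition defining $\mathrm{Psd}_{\Lambda}^{l+1}(I)$. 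Hence $\mathcal{V}\subseteq\mathrm{Psd}_{\Lambda}^{l+1}(I)$ and the latter has dimension at least eleven.

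The main obstacle I foresee is conceptual rather than computational: one has to pin down that ``maximally split'' in the paper really corresponds to the unique canonical splitting of a coupling function into its jet-linear and jet-nonlinear parts, for otherwise any nonzero linear term could be shuffled between $\Lambda_{0}$ and $\Lambda_{1}$ and the notion would become vacuous. A secondary and more cosmetic point is whether one insists on a connected $I$; in that case the same argument goes through on $I=(R_{0},\infty)$ with $\Lambda_{0}=M^{(l+1)}$ (whose kernel is the $(l+1)$-dimensional space of polynomials of degree at most $l$) together with the same damping factor $e^{-r}$, which still forces $\Lambda_{1}\to 0$ on every polynomial solution, at the cost of requiring $l\geq 10$.
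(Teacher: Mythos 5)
Your construction proves only a vacuous reading of the statement and misses the one idea the theorem actually rests on: the coupling function is not an arbitrary ODE in $M$, it has to be an \emph{integrability condition for the TOV Riccati equation} (\ref{riccati}). The entire point of introducing coupling equations is the inclusion $\mathrm{Ind}_{\Lambda}^{l+1}(I)\subset \mathrm{TOV}^{k,l}_I$: a mass function satisfying $\Lambda=0$ must render the Riccati equation integrable, and a pseudo-asymptotic mass function is one satisfying the linear part $\Lambda_0$ of such a condition together with the boundary condition $\lim_{r\to\infty}\Lambda_1=0$. This is precisely what lets Theorem \ref{thm_introduction} feed into Section \ref{extending} and Corollary \ref{corollary_introduction}: whenever a pseudo-asymptotic $M$ also happens to satisfy $\Lambda_1=0$, it satisfies $\Lambda=0$ and therefore produces an explicit TOV pressure via (\ref{0P}). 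Your $\Lambda=M'+e^{-r}M^2$ bears no relation to the coefficients (\ref{a1})--(\ref{c1}); its solutions say nothing about integrability of the TOV equation, so even if the eleven locally constant functions formally land in a set labelled $\mathrm{Psd}_{\Lambda}^{l+1}(I)$, nothing downstream of the theorem survives. The paper instead takes the Harko--Lobo--Mak integral condition (\ref{case1}), specializes $f=B^2+2gC$, substitutes the TOV coefficients to obtain the concrete first-order coupling equation (\ref{nolinear}) with splitting (\ref{decomposition_0})--(\ref{decomposition_1}), solves the linear part in closed form via (\ref{solutionlinear}), and then exhibits ten explicit choices of $h$ (solutions of (\ref{int}) for the ten right-hand sides $F$ of Table \ref{table_appendix}) for which $\Lambda_1\to 0$, plus the constant-density solution, giving eleven.

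Two smaller points. Your worry that ``maximally split'' is ill-posed unless one fixes the canonical jet-linear/jet-nonlinear splitting is legitimate, and it applies equally to the paper's own $\Lambda_1=MM'/(2\pi r^3h)$; the intended reading is indeed the canonical one, so this is a defect of the definition rather than of either construction. But your fallback on a connected domain with $\Lambda_0=M^{(l+1)}$ requires $l\ge 10$ and so breaks the ``for every $l$'' quantifier, whereas the paper gets uniformity in $l$ for free because all eleven of its mass functions are piecewise $C^{\infty}$.
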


Generally, when we take a limit in a equation, the solutions of the newer equation are not solutions of the older one. This is why we cannot use the existence of Newtonian systems to directly infer the existence of new TOV systems. But the pseudo-asymptotic limit is different, precisely because it is not a formal limit. Indeed, suppose $M\in \mathrm{Psd}_{\Lambda}^{l+1}(I)$. Then by hypothesis it satisfies $\Lambda_0$. If in addition it satisfies $\Lambda_1$ (instead of obeying only the boundary condition), then it satisfies $\Lambda$ and, therefore, it belong to $\mathrm{TOV}^{k,l} _I$. We will show that by means of modifying $I$ in Theorem \ref{thm_introduction}, many of the mass functions will actually satisfy $\Lambda_1$, ensuring the existence of new integrable TOV systems. More precisely, we will show that if a pseudo-asymptotic system has a well-behaved extension to the extended real line $\overline{R}$, then it is actually a \emph{extended TOV system}, i.e, a TOV system which instead of being defined in a union $I$ of intervals of $\mathbb{R}$, its mass-function, density and pressure are all defined in $\overline{R}$. In sum, we have the following corollary:

\begin{corollary}\label{corollary_introduction}
The space $\mathrm{\overline{\overline{TOV}}}^{\infty,\infty}$ of piecewise $C^\infty$-differentiable extended TOV systems is at least eleven-dimensional.
\end{corollary}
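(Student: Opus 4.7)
The plan is to derive Corollary~\ref{corollary_introduction} as a fairly direct consequence of Theorem~\ref{thm_introduction}. Fix $l=\infty$ and let $I$ and $\Lambda=\Lambda_0+\Lambda_1$ be as guaranteed by Theorem~\ref{thm_introduction}, so that $\mathrm{Psd}_{\Lambda}^{\infty}(I)$ is at least eleven-dimensional. By construction every $M$ in this space satisfies $\Lambda_0(M,M',\ldots,M^{(s)},r)=0$ on $I$ together with the boundary condition $\lim_{r\to\infty}\Lambda_1(M,M',\ldots,M^{(s)},r)=0$. First I would enlarge the domain by replacing $I$ with its closure inside the extended real line $\overline{\mathbb{R}}$ and pass to the linear subspace $\mathcal{P}\subset\mathrm{Psd}_{\Lambda}^{\infty}(I)$ consisting of those mass functions whose derivatives up to order $s$ admit a piecewise $C^{\infty}$-smooth extension across the point at infinity. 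That $\mathcal{P}$ remains at least eleven-dimensional is expected to follow by inspection of the construction used to prove Theorem~\ref{thm_introduction}: the eleven independent solutions of the linear equation $\Lambda_0=0$ produced there are chosen precisely so as to satisfy the asymptotic boundary condition, and the natural candidates (exponentials, rational functions of $r$, and the like) extend smoothly to~$\infty$.

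The crux of the argument, and the step I expect to be the main obstacle, is to upgrade the boundary condition $\lim_{r\to\infty}\Lambda_1=0$ into the pointwise identity $\Lambda_1\equiv 0$ on $\overline{\mathbb{R}}$, for only then does $\Lambda=\Lambda_0+\Lambda_1$ vanish identically and does $M$ qualify as a genuine TOV mass function on the extended domain. Continuity of $\Lambda_1$ in its arguments, combined with the $C^{\infty}$-extension of $M$ secured in the previous step, settles the point at infinity immediately. For finite points I would combine two facts: first, $\Lambda_1\circ(M,M',\ldots,r)$ is an analytic function of $r$ on $\overline{\mathbb{R}}$ whose value at $\infty$ is zero; second, maximality of the split decomposition forbids any nonzero piece of $\Lambda_1$ from being reabsorbed into the already identically vanishing linear part $\Lambda_0$, which leaves no algebraic room for a nonzero analytic correction once the eleven-parameter family $\mathcal{P}$ is in hand. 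This rigidity argument is where the notion of \emph{maximally} split decomposition earns its keep, and getting it to go through will require carefully exploiting the specific form of $\Lambda_1$ used in the proof of Theorem~\ref{thm_introduction}.

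Once $\Lambda\equiv 0$ on $\overline{\mathbb{R}}$ is secured for every $M\in\mathcal{P}$, the classical continuity equation (\ref{continuity_equation}) produces $\rho = M'/(4\pi r^{2})$ as a piecewise $C^{\infty}$ function on $\overline{\mathbb{R}}$, and integrating the Riccati equation (\ref{riccati}) with the now-admissible coefficients (\ref{a1})--(\ref{c1}) produces a piecewise $C^{\infty}$ pressure~$p$. The triple $(p,\rho,M)$ then belongs to $\mathrm{\overline{\overline{TOV}}}^{\infty,\infty}$ by definition, and since linearly independent mass functions yield linearly independent triples in $C_{pw}^{\infty,\infty,\infty}(\overline{\mathbb{R}})$, the eleven-dimensional space $\mathcal{P}$ injects into $\mathrm{\overline{\overline{TOV}}}^{\infty,\infty}$, giving the claimed lower bound on its dimension.
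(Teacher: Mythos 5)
Your overall strategy (extend to $\overline{\mathbb{R}}$ and try to show that the nonlinear part $\Lambda_1$ actually vanishes, so that the pseudo-asymptotic mass functions become genuine solutions) points in the same direction as the paper, but the step you yourself identify as the crux does not go through, and in fact its conclusion is false for the solutions constructed in the proof of Theorem \ref{thm_introduction}. You propose to upgrade the boundary condition $\lim_{r\to\infty}\Lambda_1=0$ to the identity $\Lambda_1\equiv 0$ on all of $\overline{\mathbb{R}}$ by combining analyticity of $r\mapsto\Lambda_1(M(r),M'(r),r)$ with ``maximality of the split decomposition.'' Neither ingredient does what you need: an analytic function vanishing at a single point (here, at infinity) need not vanish identically, and maximality of the decomposition $\Lambda=\Lambda_0+\Lambda_1$ is a structural property of the coupling \emph{function}, independent of any particular solution $M$ of $\Lambda_0=0$; it places no constraint whatsoever on the values of the composite $\Lambda_1(M(r),M'(r),r)$ at finite $r$. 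Concretely, for the first row of Table \ref{table_appendix} the paper computes
\begin{equation*}
\Lambda_1(r)=-\frac{c_2^2\left(5\pi r^2-c_1\right)}{2\pi r^6\left(c_1-\pi r^2\right)^4},
\end{equation*}
which tends to zero at infinity but is manifestly not identically zero. So the pointwise identity $\Lambda\equiv 0$ on $\overline{\mathbb{R}}$ that your third paragraph relies on is simply not available, and the final step of your argument collapses.

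The paper's actual route is more modest and avoids this trap. It proves a one-sided monotonicity lemma (Lemma \ref{lemma_extension}: a function continuous at $a_0$, tending to $0$ there, and non-negative and non-decreasing on a left neighborhood must vanish identically on a possibly smaller left neighborhood), transports everything to $[-1,1]$ via the compactification homeomorphism $\phi$, and then checks case by case (Appendix \ref{appendix_2}) that each $\Lambda_1$ has the required sign and monotonicity near $\pm\infty$. The conclusion is only that $\overline{\overline{\Lambda_1}}$ vanishes on a one-sided neighborhood of $\pm 1$, i.e.\ that the coupling equation is genuinely satisfied \emph{near the added points at infinity} of the extended real line --- which is what the paper means by an extended TOV system. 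Indeed, the ``important remark'' following Proposition \ref{corollary_extending_2} explicitly warns that these extended solutions do \emph{not} restrict to solutions of the actual TOV equation on finite intervals $I\subset\mathbb{R}$; your proposal, if it worked, would prove exactly the opposite. To repair your argument you would need to replace the analyticity/rigidity step by the monotonicity argument of Lemma \ref{lemma_extension} together with the explicit asymptotic sign analysis of each $\Lambda_1$, and correspondingly weaken the conclusion from $\Lambda_1\equiv 0$ on $\overline{\mathbb{R}}$ to vanishing near the boundary of the compactified domain.
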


The proof of Theorem \ref{thm_introduction} and its extrapolation to extended TOV systems will be done in Section \ref{proof} and in Section \ref{extending}, respectively, by making use of purely analytic arguments. In Section \ref{beyond} we argue that $\mathrm{Psd}_{\Lambda}^{\infty}(I)$ must have a higher dimension and we give two strategies that can be used to verify this. In Section \ref{sec_definitions} we present a physical classification for the densities associated with mass functions in Theorem \ref{thm_introduction} and, therefore, for the new extended TOV systems. We classify them according to if they possess or not cavities/singularities and to if they are composed of ordinary or exotic matter. In the process of classifying them we prove that they generally admit a topology change phenomenon (similar to a phase transition), allowing us to improve Theorem \ref{thm_introduction} and Corollary \ref{corollary_introduction}, showing that $\mathrm{\overline{\overline{TOV}}}^{\infty,\infty}$ is at least fifteen-dimensional.  We finish the paper in Section \ref{concluding_remarks} with a summary of the results.

\section{Proof of Theorem \ref{thm_introduction}} \label{proof_theorem}

Before giving the proof of Theorem \ref{thm_introduction}, let us emphasize that finding integrable TOV systems is a nontrivial problem. Notice that the simplest way to find explicit solutions of the Riccati equation is by choosing coefficients satisfying one of the following conditions:

\begin{enumerate}
\item $C$ identically zero. In this case the equation becomes a linear homogeneous first order ODE, which is separable.

\item $A$ identically zero. Then the equation is of Bernoulli type and, therefore, integrable by quadrature.

\item There are constants $a, b, c \in \mathbb{R}$ such that $A(r) = a$, $B(r) = b$ and $C(r) = c$ simultaneously. In this case, the equation is separable.
\end{enumerate}

\noindent Thus, recalling that TOV systems are described by a Riccati equation,  we could think of applying some of these conditions in (\ref{riccati}). It happens that the coefficients of this Riccati equation are not independent, but rather satisfy the conditions

\begin{equation}\label{coefrel}
\frac{4\pi r^4}{M(r)M'(r)} A(r) = \frac{r^2}{rM'(r) + M(r)} B(r) = \frac{1}{4\pi r} C(r).
\end{equation}

\noindent Therefore, the first two possibilities are ruled out for making  (\ref{riccati}) trivial. Also, the third condition along with (\ref{c1}) implies that

\begin{equation}
M(r) = r/2 - 2\pi r^2/c
\end{equation}
\noindent and $M'(r)$ is the corresponding linear polynomial. However, in  (\ref{coefrel}) the above $M(r)$ makes the $A$ term a quadratic polynomial, and the $B$ term a rational function of degree 1, so the equality can never hold.

\subsection{The proof}\label{proof}
Now we will prove Theorem \ref{thm_introduction}.

\begin{proof}
We start by noticing that in \cite{Tiberiu} it was shown that if the coefficients of any Riccati equation satisfy additional differential or integral conditions, then the nonlinearity of the starting equation can be eliminated, making it fully integrable. Each class of conditions is parametrized by functions $f:I \rightarrow \mathbb{R}$ and real constants. Because the authors of \cite{Tiberiu} worked only with smooth Riccati equations, they assumed $f:I \rightarrow \mathbb{R}$ smooth. However, it should be noticed that in the general situation we may have $f\in C^m_{pw}(I;\mathbb{R})$ or $f\in C^{m+1}_{pw}(I;\mathbb{R})$, where $m$ is the least order of differentiability of the coefficients of the Riccati equation. Furthermore, from (\ref{a1}), (\ref{b1}) and (\ref{c1}) we see that $m=l$, where $l$ is the class of $M$.

In the following, we will use these additional equations in order to build maximally split coupling functions.  Precisely, one of the integral conditions presented in \cite{Tiberiu} is

\begin{equation}
A(r) = \frac{f(r) - \left[ B(r) + C(r) \left[ \int \frac{f(s)-B^{2}(s)}{2C(s)}\, ds - c_1 \right] \right]^{2}}{4C(r)},
\label{case1}
\end{equation}
under which a explicit solution for Riccati equation (\ref{riccati}) is given by

\begin{equation}
p(r) = \frac{e^{\int^r (B(s)+C(s)h(s)) \, ds}}{c_0-\int^r C(s)e^{\int^s (B(\phi)+C(\phi)h(\phi)) \, d\phi} \, ds} + \frac{h(r)}{2}
\label{0P}
\end{equation}

\noindent where $c_0$ is a constant of integration and
\begin{equation}
 h(r) = \int^{r} \frac{f(s)-B^{2}(s)}{2C(s)}\, ds - c_1.
\end{equation}
Let us fix $f(r) = B^{2}(r)+2g(r)C(r)$, where $g:I \rightarrow \mathbb{R}$ is some integrable function. We will also take $c_1 = 0$.  Let $h(r) = \int g(s)\, ds$. Using the coefficients of TOV equation (\ref{riccati}), (\ref{case1}) becomes
\begin{equation}
M'(r) + \frac{M(r)M'(r)}{2\pi r^{3}h(r)} - \left( \frac{2h'(r)}{h(r)} - \frac{1}{r} \right) M(r) + 2\pi r^{2}h(r) + \frac{rh'(r)}{h(r)} = 0,
\label{nolinear}
\end{equation}
which is a coupling equation of order 1. Notice that it is maximally split, with maximal splitting decomposition given by
\begin{align}
 \Lambda _0 (M,M',r)&= M'-\left( \frac{2h'(r)}{h(r)} - \frac{1}{r} \right) M + 2\pi r^{2}h(r) + \frac{rh'(r)}{h(r)} \label{decomposition_0}\\
 \Lambda _1 (M,M',r)&=  \frac{MM'}{2\pi r^{3}h(r)} \label{decomposition_1}.
\end{align}
Therefore, giving $M$ and $h$ such that  $\displaystyle\lim_{r \to \infty} \Lambda_1 (M,M',r) = 0 $, then $M$ will automatically belong to $\mathrm{Psd}_{\Lambda}^{k}(I)$. We found ten such pairs. They are obtained by regarding $h$ as solutions of the differential equation
\begin{equation}
F(r) = 2\pi r^3h^2(r) + r^2h'(r),
\label{int}
\end{equation}
for different $F:I\rightarrow \mathbb{R}$, as organized in Table \ref{table_appendix} of  Appendix \ref{appendix}. Only to illustrate the method, we will show how the first row is obtained. The other rows are direct analogues, only involving more calculations.

Recall that we are trying to find pairs $(M,h)$ such that $\operatorname{psdlimit}_{r\rightarrow \infty}\, \Lambda =0$, i.e, such that $\lim _{r\rightarrow \infty} \Lambda _1 =0$ and such that the coupling equation induced by $\Lambda _0$ is satisfied. Suppose such a pair was found. Then, by the linearity of $\Lambda _0$, they become related (up to addition of a constant) by
\begin{equation}
M(r) = \frac{h^2(r)}{r} \left(\int_1^r -\frac{s^2 \left(h'(s)+2 \pi  s h^2(s)\right)}{h^3(s)} \, ds+c_2\right)
\label{solutionlinear}
\end{equation}
where $c_2$ is a integration constant. Taking $F=0$ in  (\ref{int}) we see that it is solved for
\begin{equation}
h(r) = \frac{1}{\pi (r^2 + c_1 / \pi)},
\label{0h}
\end{equation}
so that (\ref{solutionlinear}) becomes
\begin{equation}\label{0M}
M(r) = \frac{c_2h^2(r)}{r},
\end{equation}
whose derivative is
\[
M'(r) = \frac{c_2\left(2rh'(r)h(r)-h^2(r)\right)}{r^2}.
\]
Thus $\Lambda _1$ is given by the following expression, which clearly goes to zero as $r\to\infty$.
\begin{equation*}\label{lambda_1_0}
	\Lambda_1(M,M',r) = \Lambda _1 (r)= \frac{c_2^2h^2(r)\left(2rh'(r)-h(r)\right)}{2\pi r^6} = -\frac{c_2^2 \left(5 \pi  r^2-c_1\right)}{2 \pi  r^6 \left(c_1-\pi  r^2\right){}^4}.
\end{equation*}
Defining $(M,h)$ by (\ref{0M}) and (\ref{0h}), they will clearly satisfy the desired conditions, showing that $M\in \mathrm{Psd}_{\Lambda}^{l}(I)$.

Now, notice that all involved functions are actually piecewise $C^{\infty}$, so that we can take $l$ arbitrary. On the other hand, the domains of the functions in Table \ref{table_appendix} are different, but we can restrict them to the intersection of the domains and then (since we are working with piecewise differentiable functions) extend all of them trivially to the starting $I$. This finishes the proof of Theorem \ref{thm_introduction}, except by the fact that Table \ref{table_appendix} contains \emph{ten} linearly independent mass functions instead of the \emph{eleven} ones stated in Theorem \ref{thm_introduction}. The one missing is just the well known constant density solution. For completeness, let us show that it can also be directly obtained from our method. Indeed, by taking  $f(r) = B^2(r)$ in (\ref{case1}),  writing $c=  8\pi ^2 c_1 /3 $, and using the coefficients of the TOV system, the coupling equation (\ref{nolinear}) becomes

\begin{equation*}
M'(r) + \frac{3M(r)M'(r)}{4\pi cr^{3}} + \frac{M(r)}{r} +\frac{4 \pi cr^{2}}{3} = 0,
\end{equation*}
which has solution $M(r) = 4\pi cr^3 /3 $, whose associated density is $\rho(r) = c$.
 \end{proof}

 \begin{remark}
 \emph{In the construction, the motivation of the definition of $h(r)$ in (\ref{0P}) is the control of the integral term. By control, we mean that the $h$ function is a integral of a integrable function $g$, freely chosen.}
 \end{remark}
 \begin{remark}
 A note on the differentiability of the pressure. \emph{From (\ref{0P}) we see that $p\in C^{k'}_{pw}(I)$, where $k'$ is the minimum between the differentiability class of $h$ and the class of $M$. The latter is $\infty$, as obtained above, so that $k'$ is the class of $h$. But looking at Table \ref{table_appendix} we see that the class of $h$ is $\infty$ in each of the cases. Thus, $p\in C^{\infty}_{pw}(I)$.}
 \end{remark}

\subsection{Extending}\label{extending}

In the last section we proved that there exists a subset $I\subset \mathbb{R}$, which can be regarded as a disjoint union of intervals, such that we have a maximally split coupling function $\Lambda$ of order 1, whose corresponding space of pseudo-asymptotic TOV systems $\mathrm{Psd}_{\Lambda}^{\infty}(I)$ is at least eleven-dimensional. As discussed in the Introduction, a pseudo-asymptotic TOV system does not need to be a TOV system. Here we will show that by means of modifying $I$ properly, i.e, by working on the extended real line, we can assume that some of the pseudo-asymptotic systems that we have obtained really define TOV systems.

Recall that if there exists a coupling function $\Lambda$ such that a pseudo-asymptotic mass function $M$ satisfies not only the linear part $\Lambda _0$ but also the nonlinear one $\Lambda _1$, then $M$ actually defines an integrable TOV system. So, our problem is to analyze when the pseudo-asymptotic $M$ obtained in the last section satisfies the differential equation $\Lambda_1 (M,M',r)=0$ for $\Lambda _1$ given by (\ref{decomposition_1}). We will give sufficient conditions on the general pseudo-asymptotic mass functions in order for this to happen. That these conditions are satisfied for our mass functions will be a consequence of their classification. The fundamental step is the following result from real analysis.
\begin{lemma}\label{lemma_extension}
Let $f: I \rightarrow \mathbb{R}$ be continuous at a point $a_0\in I$ and such that $f(x)\rightarrow 0$ when $x\rightarrow a_0$. Assume that there exists $\varepsilon>0$ such that one of the following conditions is satisfied:
\begin{enumerate}
    \item[\emph{c1)}] $f$ is non-negative and non-decreasing in $(a_0-\varepsilon,a_0]$;
    \item[\emph{c2)}] $f$ is non-positive and non-increasing in $[a_0,a_0+\varepsilon)$.
\end{enumerate}
Then there exists $0< \varepsilon' \leq \varepsilon$ such that $f$ is constant and equal to zero in $(a_0-\varepsilon',a_0]$ in the first case, and in $[a_0,a_0+\varepsilon')$ in the second case.
\end{lemma}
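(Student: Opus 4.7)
The plan is to reduce the statement to an elementary squeeze argument, with continuity at $a_0$ serving only to pin down the value $f(a_0)$. My first step is to observe that ``$f$ is continuous at $a_0$'' together with $f(x)\to 0$ as $x\to a_0$ forces $f(a_0)=0$. This is the only substantive use of the continuity hypothesis; the remainder of the proof is purely order-theoretic.

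For case c1), I would fix any $x\in(a_0-\varepsilon,a_0]$. The non-decreasing hypothesis on this interval yields $f(x)\leq f(a_0)=0$, while non-negativity yields $f(x)\geq 0$. Together these two inequalities pinch $f(x)=0$ identically on the entire interval $(a_0-\varepsilon,a_0]$, so one may simply take $\varepsilon'=\varepsilon$ (rather than having to shrink the neighborhood).

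Case c2) is handled by the analogous argument on the right-hand neighborhood $[a_0,a_0+\varepsilon)$: the monotonicity hypothesis bounds $f(x)$ on one side by comparison with $f(a_0)=0$, and the sign hypothesis provides the opposite bound, again pinching $f$ to zero and permitting $\varepsilon'=\varepsilon$. One simply has to pair the direction of monotonicity with the sign condition so that the two bounds land on opposite sides of $0$.

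Since the lemma is essentially a one-line consequence of continuity plus a sandwich, there is no real obstacle. The only point requiring a moment of care is verifying the pairing described above: one must check that the chosen sign and monotonicity conditions actually produce opposing bounds at $f(a_0)=0$ rather than collapsing to a single one-sided bound. Beyond that, the argument is purely mechanical and completely independent of the differential-equation context in which Lemma~\ref{lemma_extension} will later be applied.
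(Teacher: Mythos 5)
Your argument for case c1) is correct and essentially identical to the paper's proof: continuity at $a_0$ together with $f(x)\to 0$ forces $f(a_0)=0$, monotonicity gives $f(x)\le f(a_0)=0$ for $x\le a_0$ in $(a_0-\varepsilon,a_0]$, and non-negativity closes the squeeze; your observation that one may take $\varepsilon'=\varepsilon$ is a harmless sharpening of the stated conclusion.

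The problem is case c2), and it is exactly the ``pairing check'' that you yourself single out as the only delicate point: you assert that it succeeds, but for the hypotheses as literally written it fails. On $[a_0,a_0+\varepsilon)$ one has $a_0\le x$, so ``non-increasing'' gives $f(a_0)\ge f(x)$, i.e.\ $f(x)\le 0$ --- the \emph{same} side as the sign hypothesis ``non-positive''. The two bounds collapse to a single one-sided bound and there is no squeeze; indeed $f(x)=-(x-a_0)$ is continuous at $a_0$, tends to $0$ there, and is non-positive and non-increasing on $[a_0,a_0+\varepsilon)$, yet vanishes only at the single point $a_0$. So condition c2) as stated does not imply the conclusion; the correct mirror of c1) is ``non-positive and non-\emph{de}creasing'' (equivalently ``non-negative and non-increasing''), under which your squeeze works verbatim. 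To be fair, the paper's own proof disposes of c2) with the single sentence ``an analogous argument will give the result'' and therefore hides the same defect; but a proof cannot simply assert that the bounds land on opposite sides of $0$ when for c2) they do not, and the issue propagates to Corollary \ref{corollary_extending} and Proposition \ref{corollary_extending_2}, where the ``non-positive and non-increasing'' branch is the one actually invoked.
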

\begin{proof}
Because $f$ is continuous in $a_0$ and $f(x)\rightarrow 0$ when $x\rightarrow a_0$, we have $f(a_0)=0$. If $f$ satisfies the first condition, since $f$ is non-decreasing, it follows that if $x \leq y$ in $(a_0-\varepsilon,a_0]$, then $f(x)\leq f(y)$. This means that for every $x$ we have $f(x)\leq f(a_0)=0$. But $f$ is  non-negative in $(a_0-\varepsilon,a_0]$, hence we must have $f(x)=0$ in this interval. For the second condition, an analogous argument will give the result.
\end{proof}

Now, recall that we can extend the real line $\mathbb{R}$ in two different ways: by adding a point at infinity $\infty$ or by adding both $+\infty$ and $-\infty$. In the first case we have the \emph{projectively extended real line} $\hat{\mathbb{R}}$, while in the second one we have the \emph{extended real line} $\overline{\mathbb{R}}$. For the arithmetic construction of these objects, see \cite{extended_line_1}. Topologically, both spaces acquire natural Hausdorff compact topologies: $\hat{\mathbb{R}}$ is the one-point compactification of $\mathbb{R}$ and, therefore, is homeomorphic to the circle $\mathbb{S}^1$, while $\overline{\mathbb{R}}$ is the two-point compactification of $\mathbb{R}$ and has an order topology homeomorphic to $[-1,1]$ \cite{extended_line_2}. In this article we will use only $\overline{\mathbb{R}}$. One can think of the homeomorphism $\phi: \overline{\mathbb{R}}\rightarrow [-1,1]$ as a rescaling of $\overline{\mathbb{R}}$ that brings the infinities closer together. This is similar to the idea of compactification of spacetime used in the Penrose diagrams, with the difference that there the underlining topological space does not become actually compact.

Notice that any piecewise continuous function $f:I\rightarrow \mathbb{R}$ which is not oscillating in $\pm \infty$ admits an extension $\overline{f}$ to $\overline{\mathbb{R}}$, as follows. We first extend it to $\mathbb{R}$ by defining $\overline{f}(x)=0$ when $x\notin I$ and then take $\overline{f}(\pm \infty)=\lim_{x\rightarrow \pm \infty}f(x)$. By definition, this extension is continuous at $\pm \infty$ \footnote{This clearly does not mean that $\overline{f}$ is continuous in the whole domain $\overline{\mathbb{R}}$, only piecewise continuous.}. Now, recall that the domain of any function is in one-to-one correspondence with its graph. Furthermore, if the function is piecewise continuous, then this correspondence is actually a piecewise homeomorphism. Thus, if $f:I\rightarrow \mathbb{R}$ is any function as above, we have the commutative diagram below, where $\overline{\overline{f}}=\phi \circ \overline{f} \circ \phi ^{-1}$ is the rescaling of the extension $\overline{f}$.

\begin{equation}{\label{diagram} \xymatrix{\mathbb{R} \ \ar@{^(->}[r] & \overline{\mathbb{R}} \ar[r]_{\simeq}^{\phi} & [-1,1] \\
I \ \ar[u]^f \ar@{^(->}[r] & \overline{\mathbb{R}} \ar@{-->}[u]^{\overline{f}} \ar[r]_{\simeq}^{\phi} & [-1,1] \ar@{-->}[u]_{\overline{\overline{f}}} \\
\operatorname{graph}(f) \ar[u]^{\simeq} \ar@{-->}[r] & \operatorname{graph}(\overline{f}) \ar[u]^{\simeq} \ar@{..>}[r]_{\simeq}^{\varphi} & \operatorname{graph}(\overline{\overline{f}}) \ar[u]_{\simeq}}}
\end{equation}

From the above remarks and from Lemma \ref{lemma_extension} we get the following corollary:

\begin{corollary}\label{corollary_extending}
Let $\Lambda$ be a maximally split coupling function, $M$ a pseudo-asymptotic mass function for the coupling function $\Lambda$  and suppose that $\Lambda _1$ is not oscillating in $\pm \infty$. If there exists $0< \varepsilon \leq 2$ such that $\overline{\overline{\Lambda_1}}$ satisfies condition \emph{(c1)} (resp. \emph{(c2)}) of Lemma \ref{lemma_extension}, then there exists $\varepsilon '$ such that $\overline{\overline{\Lambda_1}}$ is zero in $(1-\varepsilon,1]$ (resp. in $[-1,-1+\varepsilon ))$.
\end{corollary}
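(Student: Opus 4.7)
The plan is to recognize that this corollary is essentially a direct translation of Lemma \ref{lemma_extension} through the rescaling machinery of diagram (\ref{diagram}), so the proof reduces to verifying the remaining hypotheses of the lemma (continuity at $a_0$ and the limit condition) at the boundary point $a_0 = 1$ for case (c1), or $a_0 = -1$ for case (c2). The monotonicity/sign condition is already given, so only two things have to be checked before invoking the lemma.

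First, I would verify that the extension $\overline{\Lambda_1}$ described in the paragraph surrounding diagram (\ref{diagram}) is well defined and continuous at $+\infty$. This is exactly where the non-oscillation hypothesis is used: non-oscillation at $\pm\infty$ guarantees that the one-sided limits $\lim_{r\to\pm\infty}\Lambda_1(r)$ exist in $\overline{\mathbb{R}}$, which is precisely the condition needed for the prescription $\overline{\Lambda_1}(\pm\infty)=\lim_{r\to\pm\infty}\Lambda_1(r)$ to produce a piecewise continuous extension that is continuous at the added points. The pseudo-asymptotic assumption on $M$ then sharpens this to $\overline{\Lambda_1}(+\infty)=0$.

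Next, I would transport this information to $[-1,1]$ via the homeomorphism $\phi$. Because $\phi$ is a homeomorphism sending $+\infty$ to $1$, the composition $\overline{\overline{\Lambda_1}} = \phi\circ\overline{\Lambda_1}\circ\phi^{-1}$ inherits continuity at $1$ with $\overline{\overline{\Lambda_1}}(1)=0$, so in particular $\overline{\overline{\Lambda_1}}(x)\to 0$ as $x\to 1$. With the three hypotheses of Lemma \ref{lemma_extension} now in place at $a_0=1\in[-1,1]$ (continuity, vanishing limit, plus the assumed condition (c1) on $(1-\varepsilon,1]$), the lemma applies verbatim and delivers an $\varepsilon'$ with $0<\varepsilon'\leq\varepsilon$ such that $\overline{\overline{\Lambda_1}}\equiv 0$ on $(1-\varepsilon',1]$.

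The (c2) case at $a_0=-1$ is symmetric, the only caveat being that one must read the non-oscillation hypothesis together with the pseudo-asymptotic condition as also providing $\overline{\Lambda_1}(-\infty)=0$ (which is the natural left-sided analogue of the $+\infty$ condition used in Theorem \ref{thm_introduction}). I do not anticipate any real obstacle: the argument is purely bookkeeping across the three layers (the original function on $I$, the continuous extension $\overline{\Lambda_1}$ on $\overline{\mathbb{R}}$, and the rescaled version $\overline{\overline{\Lambda_1}}$ on $[-1,1]$) and verifying that each passage preserves continuity and the limit value $0$. The only conceptual subtlety worth flagging in the writeup is the role of non-oscillation as the hypothesis that legitimizes the extension in the first place, without which neither $\overline{\Lambda_1}(\pm\infty)$ nor the hypotheses of the lemma would even make sense.
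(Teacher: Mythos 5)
Your proposal is correct and follows essentially the same route as the paper: non-oscillation guarantees that $\overline{\Lambda_1}$ exists and is continuous at $\pm\infty$, the homeomorphism $\phi$ carries $\pm\infty$ to $\pm 1$ so that $\overline{\overline{\Lambda_1}}$ is continuous there, and Lemma \ref{lemma_extension} then applies at $a_0=\pm 1$. You are in fact slightly more explicit than the paper in noting that the vanishing of the limit required by the lemma comes from the pseudo-asymptotic condition on $M$, which is a worthwhile detail to record.
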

\begin{proof}
By hypothesis $\Lambda _1$ is not oscillating in $\pm \infty$, so that the extension $\overline{\Lambda _1}$ exists and it is continuous at $\pm \infty$. The points $\pm \infty$ are mapped onto $\pm 1$ by $\phi$, so that $\overline{\overline{\Lambda_1}}$ is continuous at $\pm 1$. The result then follows from Lemma \ref{lemma_extension}.
\end{proof}

Notice that starting with the TOV equation we can extend it to $\overline{\mathbb{R}}$ and then rescale the infinities by working at $[-1,1]$ via $\phi$. All we have done in the previous section will work in the same way. In particular, when finding situations in which $\overline{\overline{\Lambda_1}}=0$ we are finding cases in which the pseudo-asymptotic solutions of the \textbf{extended} TOV equation is a genuine solution of that \textbf{extended} equation. Let $\mathrm{\overline{\overline{TOV}}}^{\infty,\infty}$ denote the space of such solutions which are piecewise smooth, i.e, the space of piecewise $C^\infty$-differentiable extended TOV systems.

\begin{proposition}\label{corollary_extending_2}
$\mathrm{\overline{\overline{TOV}}}^{\infty,\infty}$ is at least eleven-dimensional.
\end{proposition}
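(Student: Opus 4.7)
The plan is to combine Theorem \ref{thm_introduction} with Corollary \ref{corollary_extending} applied to each of the eleven pseudo-asymptotic mass functions produced in Section \ref{proof}. Concretely, Theorem \ref{thm_introduction} gives ten linearly independent mass functions $M_1,\ldots,M_{10}$ (one per row of Table \ref{table_appendix}) together with the constant-density mass function $M_0(r)=4\pi c r^{3}/3$, all lying in $\mathrm{Psd}_{\Lambda}^{\infty}(I)$ for a coupling function $\Lambda$ with the maximal split decomposition (\ref{decomposition_0})--(\ref{decomposition_1}). For $M_0$ there is nothing to do: it is the classical constant-density interior Schwarzschild solution, already a genuine TOV system, and its trivial extension by zero outside $I$ is piecewise smooth and yields an element of $\mathrm{\overline{\overline{TOV}}}^{\infty,\infty}$.

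For each of the remaining $M_i$ with $i=1,\ldots,10$ the argument proceeds in three steps. First, I would substitute the explicit pair $(M_i,h_i)$ from Table \ref{table_appendix} into (\ref{decomposition_1}) to obtain a closed form for $\Lambda_1(M_i,M_i',r)=M_iM_i'/(2\pi r^{3}h_i(r))$. Second, I would check that the resulting expression is not oscillating at $\pm\infty$, so that the extensions $\overline{\Lambda_1}$ and $\overline{\overline{\Lambda_1}}$ of diagram (\ref{diagram}) exist and are continuous at $\pm 1$. Third, I would verify that $\overline{\overline{\Lambda_1}}$ fulfils condition (c1) or (c2) of Lemma \ref{lemma_extension} on some one-sided neighbourhood of $1$ or $-1$, i.e.\ that near $\pm\infty$ the function $\Lambda_1$ is of constant sign and monotone. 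Corollary \ref{corollary_extending} then forces $\overline{\overline{\Lambda_1}}$ to vanish identically on that neighbourhood. Since $\Lambda_0$ is satisfied by construction, the full coupling equation $\Lambda=\Lambda_0+\Lambda_1=0$ holds there, so $M_i$ is a genuine TOV mass function on a neighbourhood of $\pm\infty$, and, upon extending by zero elsewhere on $\overline{\mathbb{R}}$ and pairing with the density and the pressure obtained from (\ref{continuity_equation}) and (\ref{0P}), it defines an element of $\mathrm{\overline{\overline{TOV}}}^{\infty,\infty}$.

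Linear independence of the resulting eleven systems in $\mathrm{\overline{\overline{TOV}}}^{\infty,\infty}$ is inherited from the linear independence of the corresponding mass functions inside $\mathrm{Psd}_{\Lambda}^{\infty}(I)$: the assignment $M\mapsto \overline{\overline{M}}$ is $\mathbb{R}$-linear and injective on piecewise smooth functions, and the further assignment $M\mapsto (p,\rho,M)$ through (\ref{continuity_equation}) and (\ref{0P}) is linear and injective in its last coordinate. Combining these remarks with the eleven independent mass functions above gives the claimed lower bound.

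The main obstacle will be step three of the case-by-case verification: condition (c1)/(c2) is a monotonicity-plus-sign requirement on $\Lambda_1$ near $\pm\infty$, and there is no a priori reason for each $\Lambda_1(M_i,M_i',r)$ to be monotone and signed on a full one-sided neighbourhood of infinity; some could a priori change sign or have a non-monotone decay profile. The natural way to resolve this is to exploit the freedom in the integration constants $c_1,c_2$ entering (\ref{0h}) and (\ref{0M}) (and their analogues for the other rows) to tune away the problematic terms, and, where this is not enough, to invoke the explicit classification of the $M_i$ announced in the Introduction to identify the correct one-sided neighbourhoods of $\pm\infty$ on which monotonicity holds. Restricting the domain further if necessary still produces an extended TOV system, so the count of eleven is preserved.
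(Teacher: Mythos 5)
Your proposal follows essentially the same route as the paper: apply Corollary \ref{corollary_extending} to each of the ten pairs $(M_i,h_i)$ from Table \ref{table_appendix} after verifying non-oscillation and the sign/monotonicity conditions (c1)/(c2) for the explicit $\Lambda_1$ (the paper does this by graphical inspection in Appendix \ref{appendix_2}), and then adjoin the constant-density solution as the eleventh element via the inclusion of ordinary TOV systems into extended ones. The step you flag as the main obstacle is precisely the step the paper settles by the case-by-case plots, so there is no substantive divergence.
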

\begin{proof}
Let us consider the pseudo-asymptotic mass functions $M$ obtained in Section \ref{proof}, whose underlying density functions are in Table \ref{table_appendix}, so that $\Lambda_1$ is given by (\ref{decomposition_1}), which depends on a function $h$, also listed in Table \ref{table_appendix}. Writing (\ref{decomposition_1}) explicitly for each $(M,h)$, as presented in Appendix \ref{appendix_2},  we see that $\Lambda_1$ is not oscillating in $\pm \infty$, so that $\overline{\Lambda_1}$ and $\overline{\overline{\Lambda_1}}$ are well-defined. From the commutativity of diagram (\ref{diagram}) we can analyze the graph of  $\overline{\overline{\Lambda_1}}$ looking at the asymptotic behavior of the graph of $\Lambda_1$. As we see in Appendix \ref{appendix_2}, each $\Lambda_1$ either becomes non-negative and non-decreasing when $r\rightarrow +\infty$ or non-positive and non-increasing when $r\rightarrow -\infty$, which means that the corresponding $\overline{\overline{\Lambda_1}}$ have the same behavior in a neighborhood sufficiently small of $\pm 1$. The result follows from Corollary \ref{corollary_extending}.
\end{proof}

\begin{remark}[important remark]
\emph{Let  $\mathrm{TOV}^{\infty,\infty}_{I,0}\subset \mathrm{TOV}^{\infty,\infty}_I$ be the space of piecewise smooth TOV systems which are not oscillating in $\pm \infty$. We have an inclusion }

$$\overline{\overline{(\cdot)}}:\mathrm{TOV}^{\infty,\infty}_{I,0} \hookrightarrow \mathrm{\overline{\overline{TOV}}}^{\infty,\infty},\quad \mathrm{given \;by} \quad \overline{\overline{(p,M}})=(\overline{\overline{p}},\overline{\overline{M}}).$$
\emph{In Proposition \ref{corollary_extending_2} we got ten extended solutions. The remaining one is, again, the classic constant solution, now regarded as an extended solution via the inclusion above. The important fact to have in mind is that the reciprocal does not hold: an extended TOV solution when restricted to some interval $I$ of $\mathbb{R}$ is not necessarily a solution of the actual TOV equation in $I$. Indeed, if an extended TOV system $(p,M)$ depends explicitly on its behavior at $\pm$, then when restricting to $I$ the equation will not be preserved, so that $(p\vert _I,M\vert _I)$ will not belong to $\mathrm{TOV}^{\infty,\infty}_I$. Notice that this is exactly the situation of Corollary \ref{corollary_extending}, so that we cannot use Proposition \ref{corollary_extending_2} in order to get solutions of the actual TOV equation.}
\end{remark}

\subsection{Beyond}\label{beyond}
We proved Theorem \ref{thm_introduction} ensuring the existence of a coupling function $\Lambda$ whose space $\mathrm{Psd}_{\Lambda}^{\infty}(I)$ of pseudo-asymptotic solutions is \emph{at least} eleven-dimensional. In this section we show that it is at least fifteen-dimensional and discuss why it is natural to believe that it has an even higher dimension. The first assertion is due to the following reason:
\begin{enumerate}
   \item \emph{Existence of critical configurations exhibiting phase transitions}. Notice that the integrability conditions of \cite{Tiberiu}, such as (\ref{case1}), (\ref{case6}) and (\ref{case8}), depends on two constants $c_1$ and $c_2$. Consequently, the pseudo-asymptotic solutions of the corresponding coupling equations also depend on such constants. Let us write $M_{1,2}$ to emphasize that $M$ is a pseudo-asymptotic mass function depending on $c_1$ and $c_2$. Now, recall that two mass functions $M,N$ are linearly dependent in $\mathrm{Psd}_{\Lambda}^{l}(I)$ if there exists a real number $c$ such that $M(r)=cN(r)$ for every $r\in I$. This means that if the dependence of $M_{1,2}$ on $c_1$ (resp. $c_2$) is \textbf{not} in the form  $M_{1,2}=c_1 M_{2}$ (resp. $M_{1,2}=c_2 M_{1}$), then when varying $c_1$ (resp. $c_2$) we get at least two linearly independent pseudo-asymptotic mass functions. These linearly independent mass functions can be obtained by defining a new piecewise differentiable map $\mathcal{M}:I\times \mathbb{R}\times \mathbb{R} \rightarrow \mathbb{R}$, such that $\mathcal{M}(r,c_1,c_2)=M_{1,2}(r)$, and then by studying its critical points. In typical cases (as for those obtained in Section \ref{proof}, i.e, for those whose density function is in Table \ref{table_appendix}) the function $\mathcal{M}$ is piecewise a submersion. Therefore, the pre-images $\mathcal{M}^{-1}(c)$ are submanifolds $S_c$ of $\mathbb{R}^3$ and the linearly independent mass functions can be obtained by searching for topology changes in $S_c$ when $c$ vary (in a similar way we search for phase transitions in a statistical mechanics system). In the next section we will analyze the topology of the surface $S_c$ corresponding to rows 1, 2 and 7 of Table (\ref{table_appendix}), showing that they admit one, two and one topology changes, respectively. This means that Theorem \ref{thm_introduction} and Corollary \ref{corollary_extending_2} can be improved, giving the theorem below.

  \begin{theorem}
   There exists a maximally split coupling function $\Lambda$ such that $\mathrm{Psd}_{\Lambda}^{\infty}(I)$ is at least fifteen-dimensional. Furthermore,  $\mathrm{\overline{\overline{TOV}}}^{\infty,\infty}$ is at least fifteen-dimensional.
  \end{theorem}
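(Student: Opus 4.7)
The plan is to leverage the eleven pseudo-asymptotic mass functions already produced in the proof of Theorem \ref{thm_introduction} and Proposition \ref{corollary_extending_2}, and to extract four additional linearly independent solutions by exploiting the dependence of the integrability construction on the free constants $c_1, c_2$ that appear in condition (\ref{case1}) and its analogues. Concretely, for each maximally split coupling equation the formula (\ref{solutionlinear}) produces a two-parameter family $M_{c_1,c_2}(r)$, which I repackage as a single piecewise differentiable map $\mathcal{M}: I\times\mathbb{R}^2\rightarrow\mathbb{R}$ defined by $\mathcal{M}(r,c_1,c_2)=M_{c_1,c_2}(r)$. The enumerated remark preceding the theorem already indicates this is the right device; what remains is to turn the heuristic ``topology change'' count into genuine linear independence and then to extend the resulting systems to $\overline{\mathbb{R}}$.

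First I would verify that for the families associated with rows 1, 2 and 7 of Table \ref{table_appendix} the map $\mathcal{M}$ is piecewise a submersion. This is a direct computational check: since the explicit expressions are rational in $(c_1,c_2)$ and smooth in $r$, it suffices to show that at least one of the partial derivatives $\partial_r\mathcal{M},\partial_{c_1}\mathcal{M},\partial_{c_2}\mathcal{M}$ is non-vanishing away from a measure-zero set. Granting submersion-ness, the level sets $S_c=\mathcal{M}^{-1}(c)$ are two-dimensional smooth submanifolds of $I\times\mathbb{R}^2$ for almost every $c$. I would then track the diffeomorphism type of $S_c$ as $c$ varies by a Morse-theoretic projection argument (e.g.\ project onto the $r$-coordinate and record the critical values of the resulting height function). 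For the three rows in question I expect the computation to yield, respectively, one, two and one critical values of $c$ at which the homeomorphism type of $S_c$ jumps, giving four new topological phases.

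In each new phase I select a representative pair $(c_1^{\ast},c_2^{\ast})$ and take the corresponding mass function $M_{c_1^{\ast},c_2^{\ast}}$. The phase-transition character of these critical values prevents $M_{c_1^{\ast},c_2^{\ast}}$ from being a scalar multiple of any representative from another phase, nor of the generic mass function, since a multiplicative rescaling is smooth and cannot cross topological types of $S_c$; this yields linear independence. Combined with the eleven functions already obtained, this produces the claimed fifteen-dimensional lower bound for $\mathrm{Psd}_{\Lambda}^{\infty}(I)$. For the second half of the statement I would rerun the argument of Proposition \ref{corollary_extending_2} on each of the four new representatives: write out $\Lambda_1$ explicitly using (\ref{decomposition_1}), confirm non-oscillation at $\pm\infty$, and check the monotonicity/sign hypothesis (c1) or (c2) of Lemma \ref{lemma_extension} near $\pm\infty$. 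Corollary \ref{corollary_extending} then forces $\overline{\overline{\Lambda_1}}$ to vanish in a one-sided neighbourhood of $\pm 1$, so each new $M$ is an element of $\mathrm{\overline{\overline{TOV}}}^{\infty,\infty}$.

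The main obstacle will be the topological bookkeeping in the second step: cleanly identifying which critical values of $c$ correspond to genuine topology changes of $S_c$, as opposed to spurious ones caused by the domain $I$ having multiple connected components or by zeros/poles of $h$ (recall from Table \ref{table_appendix} that $h$ is typically a rational function). A second, smaller obstacle is to ensure the chosen representatives $M_{c_1^{\ast},c_2^{\ast}}$ survive the extension hypothesis simultaneously with the original ten, so that all fifteen functions can be restricted to a common $I$ and extended to $\overline{\mathbb{R}}$ via diagram (\ref{diagram}) without destroying linear independence after trivial extension.
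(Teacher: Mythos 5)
Your proposal follows essentially the same route as the paper: the four extra dimensions come from the one, two and one critical configurations of rows 1, 2 and 7 of Table \ref{table_appendix}, detected through the map $\mathcal{M}(r,c_1,c_2)=M_{c_1,c_2}(r)$ and topology changes of the associated level sets, with linear independence inferred from the phase-transition structure and the extension to $\mathrm{\overline{\overline{TOV}}}^{\infty,\infty}$ obtained by rerunning the argument of Proposition \ref{corollary_extending_2} and Corollary \ref{corollary_extending} on the new representatives. The only cosmetic difference is that the paper's actual computations in Section \ref{classification} locate the critical configurations through the singularity and cavity sets of the density $\rho_{12}$ (the submanifolds cut out by $\mathcal{Q}$ and by $\mathcal{P}$, $\mathcal{O}-1$) rather than through level sets of $\mathcal{M}$ itself, but the counting and the logic are identical to yours.
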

  The second assertion is suggested by the following reason:
    \item \emph{Existence of other maximally split coupling functions}. Recall that our starting point to get $\Lambda$ in (\ref{nolinear}) was the integral equation (\ref{case1}) obtained in \cite{Tiberiu} which when satisfied induces a solution for the TOV equation. In \cite{Tiberiu}, besides (\ref{case1}), other nine integral/differential equations playing the same role are presented. Applying to these other nine equations a strategy analogous to that used in (\ref{case1}) to obtain (\ref{nolinear}), allows us to obtain new coupling equations. We recall that each integrability equation in \cite{Tiberiu} becomes parametrized by certain constants and by an arbitrary function $f$. By making a suitable choice of $f$ in the sixth and eighth cases of \cite{Tiberiu}, we see that the induced coupling equations coincide with (\ref{nolinear}). Explicitly, the sixth case is given by
    \begin{equation}\label{case6}
B(r) = \frac{f_3(r) -A(r)C(r) - C^2(r)\left[\int \frac{f_3(r)}{2C(r)}\, dr - c_7\right]^{2}}{2C(r)\left[\int \frac{f_3(r)}{2C(r)}\, dr - c_7\right]}
\end{equation}
where $f_3\in C^{k+1}_{pw}(I)$ and $c_7$ is a constant of integration. For the choice $f_3(s) = 2C(s)g(s)$, where $g \in C^{k+1}_{pw}(I)$, the induced coupling equation reduces to (\ref{nolinear}). Furthermore, the eighth case is
\begin{equation}\label{case8}
    B(r) = \frac{1}{f_4(r)}\left[C(r)\frac{d}{dr}\left(\frac{f_4(r)}{C(r)}\right)-\frac{f_{4}^{2}(r)}{2}-2A(r)C(r)\right]
\end{equation}
where $f_4\in C^{k+1}_{pw}(I)$. If we choice $f_4(r) = h(r)C(r)$ again we get (\ref{nolinear}). Therefore, in view of the methods developed in Section (\ref{proof}), equations (\ref{case6}) and (\ref{case8}) do not differ from (\ref{case1}). On the other hand, we could not find $f$ which makes the coupling equation induced by each of the other seven cases equals to (\ref{nolinear}). This does not means that they will produce new pseudo-asymptotic mass functions which will eventually define (via Corollary \ref{corollary_extending}) new extended TOV systems\footnote{Indeed, many of the induced maximally split coupling equation have a nonlinear part which is really high nonlinear.}. Even so, it suggests the possibility.
\end{enumerate}

\section{Classification}\label{classification} \label{sec_definitions}
So far we have focused on getting new integrable extended TOV systems. In this section we will work to give physical meaning to discovered systems. In order to do this, we propose a simple classification of general stellar systems in which we will consider the new extended TOV systems. Indeed, let $(p,\rho)\in C^k_{pw}(I)\times C^l_{pw}(I)$ be a stellar system of degree $(k,l)$. We say that it is
\begin{itemize}
\item \emph{ordinary (resp. exotic)} in an open interval $J \subset I$ if the density $\rho$ is positive (resp. negative)
in each point of $J$. That is, $\rho(r)>0$ (resp. $\rho(r)<0$) for $r$ in $J$;
\item \emph{without cavities} if $\rho$ has no zeros, i.e, $\rho(r)\neq 0$ for every $r$ in $I$;
\item \emph{without singularities} if its domain is an open interval $(0,R)$, where $R$ can be $\infty$;
\item \emph{smooth} if it is without singularities and $\rho \in C^{\infty}(I)$;
\item \emph{realistic} if it is smooth and ordinary in $I$.
\end{itemize}
A \emph{cavity radius} of $(p,\rho)$ is a zero of $\rho$. Similarly, a \emph{singularity} of $(p,\rho)$ is a discontinuity point of $\rho$. So, $(p,\rho)$ is without cavities (resp. without singularities) iff it has no cavity radius (resp. singularity).

When looking at Table \ref{table_appendix} it is difficult to believe that some of the stellar systems there described are realistic. In fact, as can be rapidly checked, when defined in their maximal domain, these systems are in fact unrealistic. But as we will see, when restricted to a small region, many of them becomes realistic. This becomes more clear looking at Figure \ref{drawfigures} below, which describe the classification of certain rows of Table \ref{table_appendix}. In the schematic drawings, the filling by the grid is associated to exotic matter, whereas the filling by the hexagons to ordinary matter. The dashed circles represents a singularity radius and the dot circles a cavity radius.

\begin{figure}[h]	
	\centering
	\begin{subfigure}[h]{0.25\textwidth}
		\centering
		\includegraphics[scale=0.25]{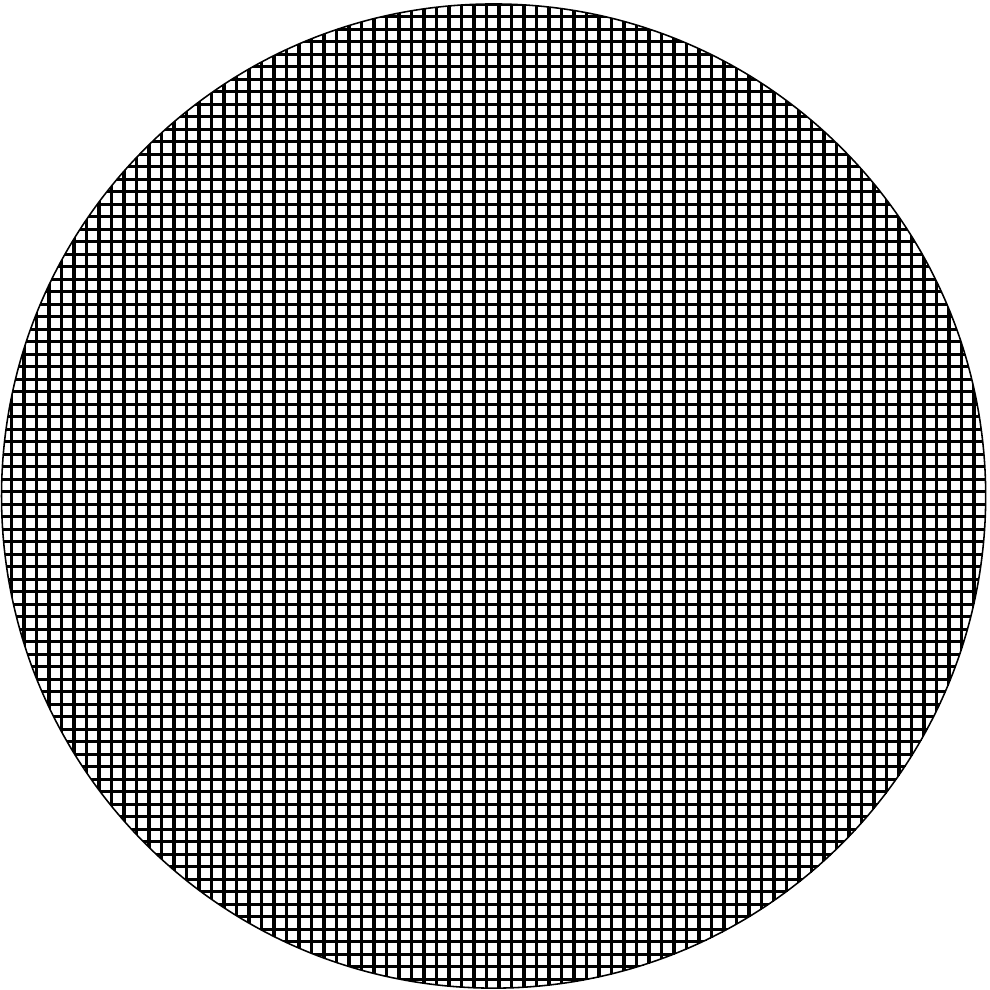}
		\caption{\centering Schematic Drawing of Figure \ref{graph1}}
        \label{draw1}		
	\end{subfigure}
	\quad\quad
	\begin{subfigure}[h]{0.25\textwidth}
	\centering
    \includegraphics[scale=0.25]{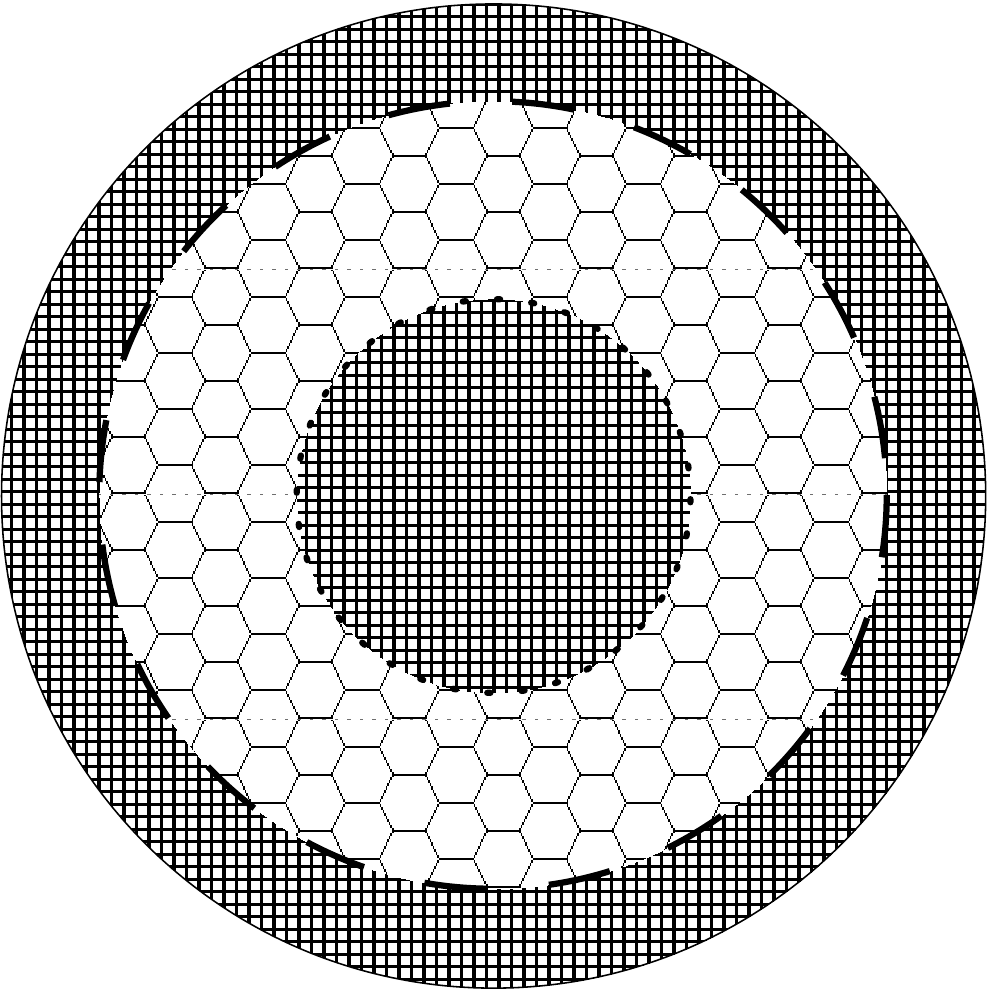}
    \caption{\centering Schematic Drawing of Figure \ref{graph2}}
    \label{draw2}
    \end{subfigure}
    \\
	\begin{subfigure}[h]{0.25\textwidth}
	\centering
	\includegraphics[scale=0.25]{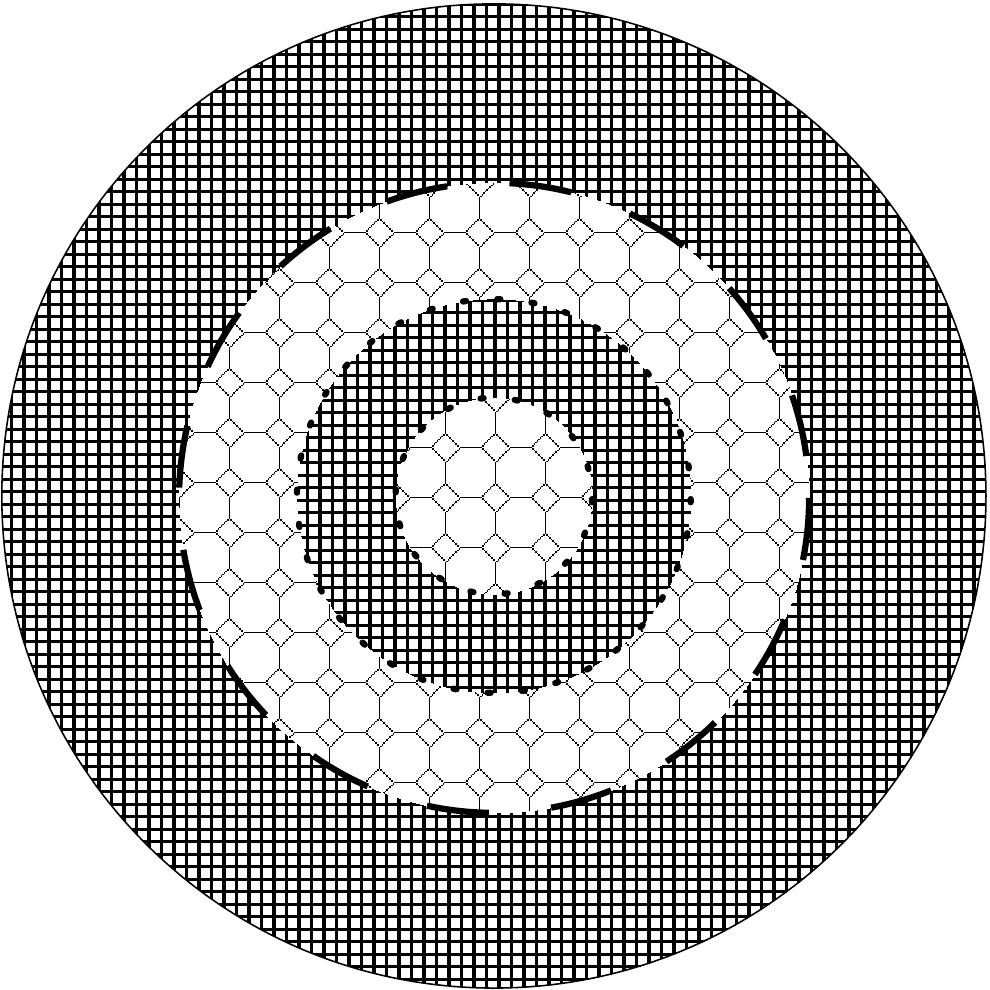}
	\caption{\centering Schematic Drawing of Figure \ref{graph6}}
    \label{draw3}	
	\end{subfigure}
	\quad\quad
    \begin{subfigure}[h]{0.25\textwidth}
	\centering
	\includegraphics[scale=0.25]{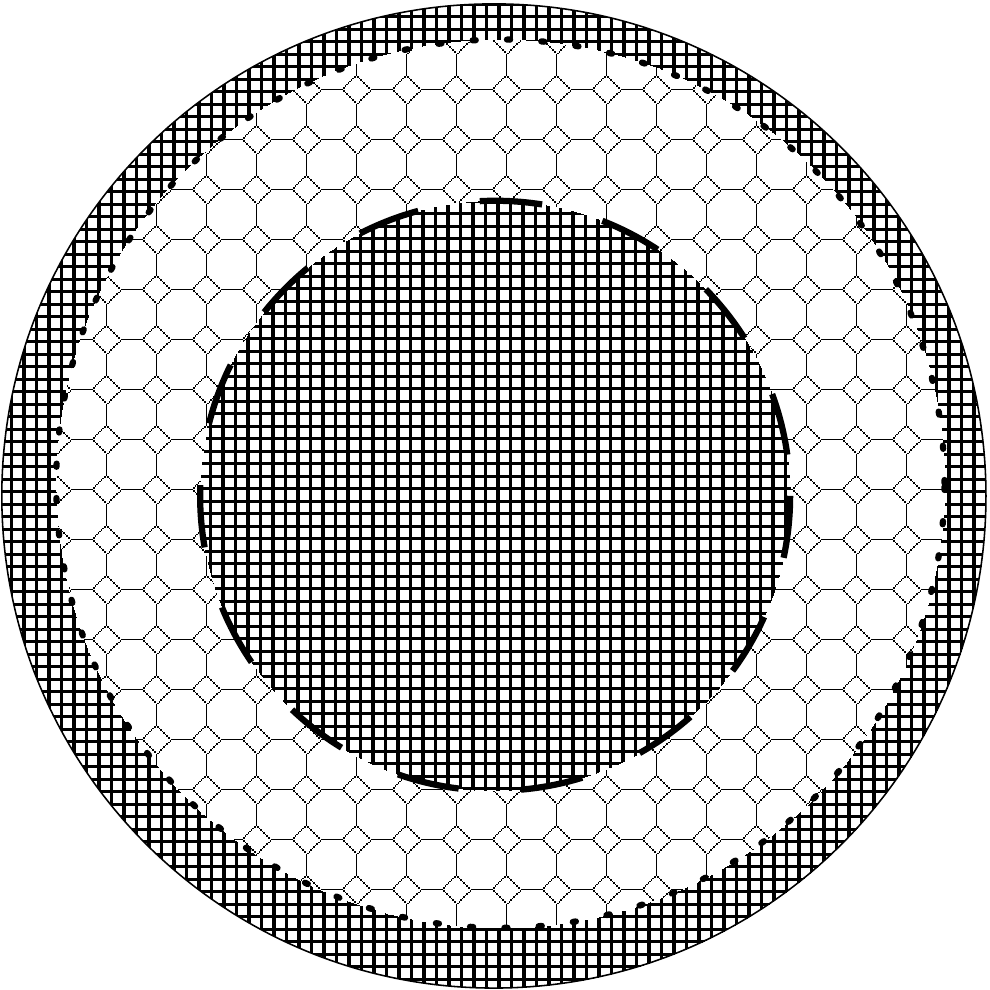}
	\caption{\centering Schematic Drawing of Figure \ref{graph7}}
    \label{draw4}	
	\end{subfigure}
	\\
	\begin{subfigure}[h]{0.25\textwidth}
	\centering
	\includegraphics[scale=0.25]{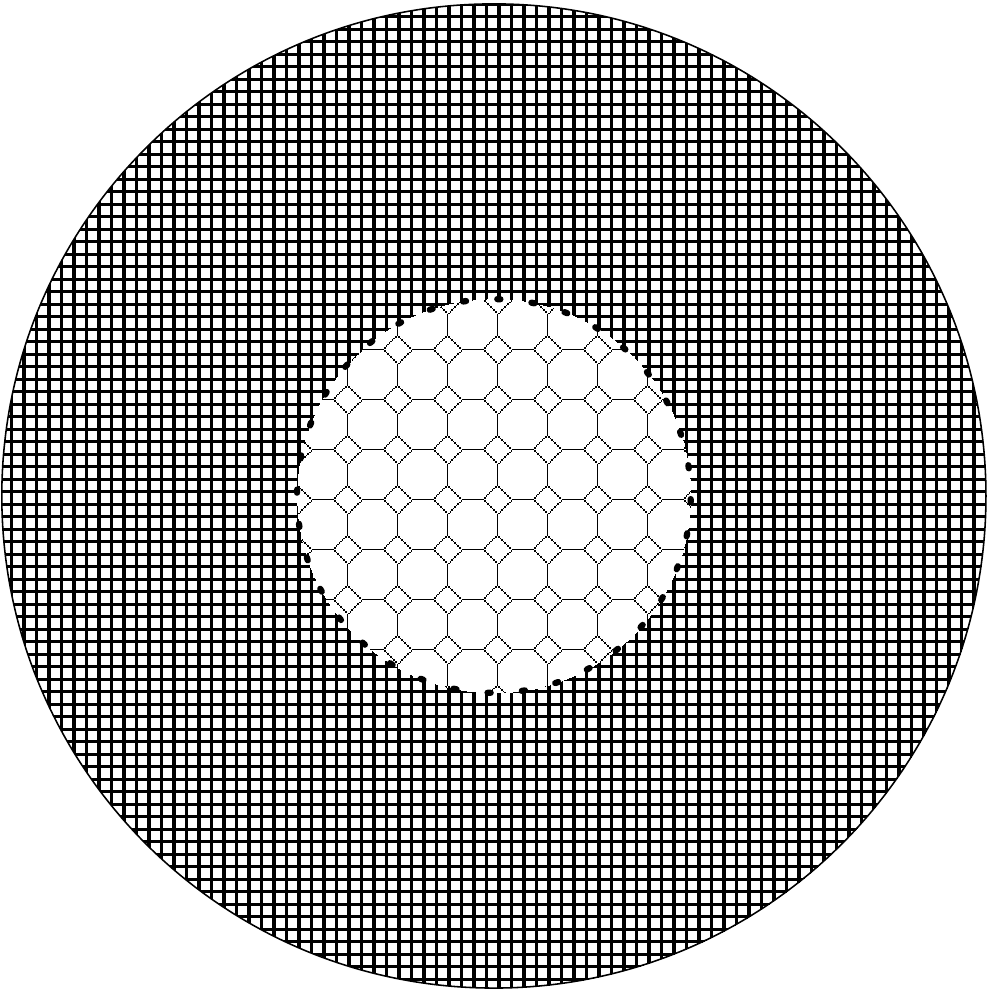}
	\caption{\centering Schematic Drawing of Figure \ref{graph8}}
    \label{draw5}		
	\end{subfigure}
	\quad\quad
	\begin{subfigure}[h]{0.25\textwidth}
	\centering
	\includegraphics[scale=0.25]{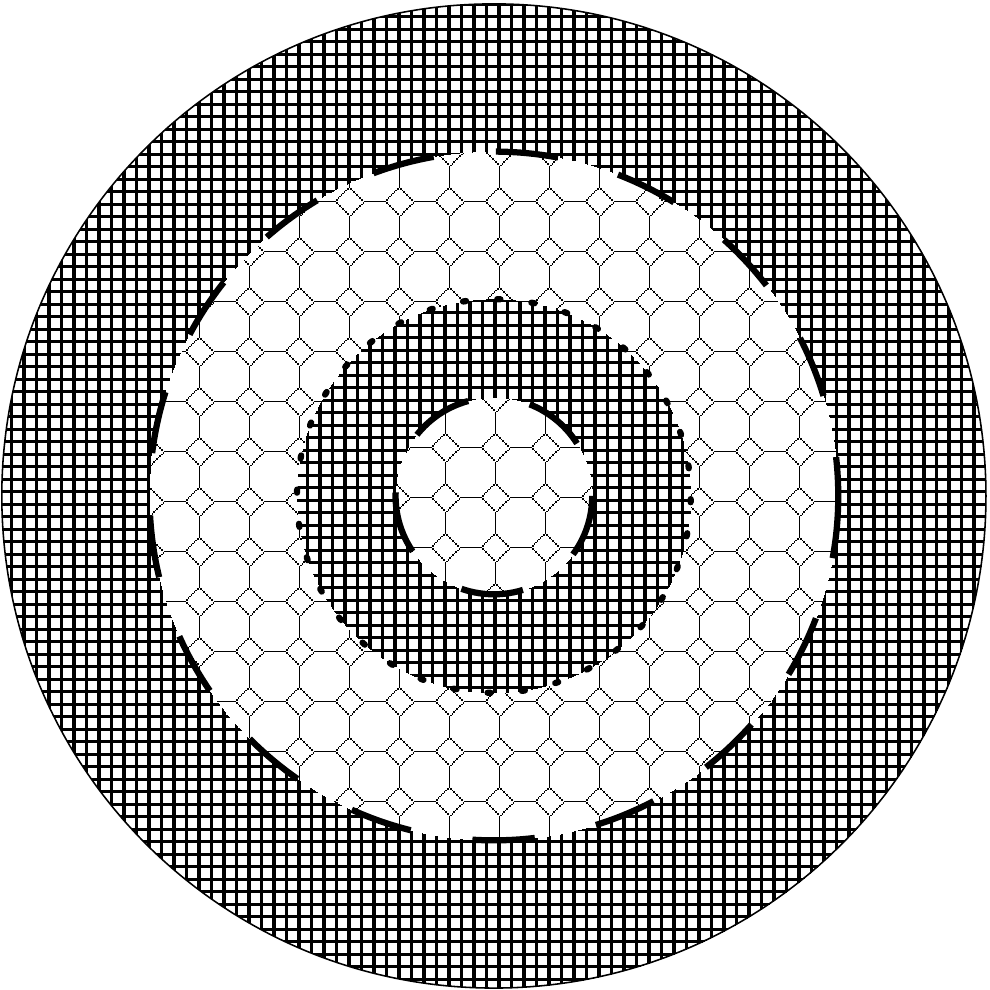}
	\caption{\centering Schematic Drawing of Figure \ref{graph9}}
    \label{draw6}	
	\end{subfigure}
    \caption{Classification of some new pseudo-asymptotic TOV systems.}
    \label{drawfigures}
\end{figure}

Notice that in order to do this classification we need to search for the zeros of the density function, which will give the radii in which there is no matter inside the star. Suppose that we found one of them, say $r_o$. If $\rho$ is continuous in that radius, then it is a cavity; otherwise, it is a singularity. The fundamental difference between them is that continuity implies that $\rho$ cannot change sign in neighborhoods of $r_o$. This means that a star containing only cavities is composed either of ordinary matter or of exotic matter. On the other hand, stellar systems with singularities may contain both ordinary and exotic matter.

The stellar systems considered in Table \ref{table_appendix} have densities of the form $\rho (r)=\frac{p(r)}{q(r)}\log(o(r))$, where $p$, $q$ and $o$ are polynomials with integer or fractional powers. Singularities are identified with zeros of $q$, while cavities are given by zeros of $p$ and $o-1$. The existence of real roots for a given polynomial is strongly determined by its coefficients and, in the present situation, the coefficients of $p$, $q$ and $o$ depend on two real parameters $c_1$ and $c_2$. We write $p_{12}$, $q_{12}$ and $o_{12}$ in order to emphasize this fact. We can then search for \emph{critical configurations}, in which a small change of $c_1$ and $c_2$ produces a complete modification of the system, as in a phase transition in statistical mechanics.
The critical configurations can be captured by defining new functions $$\mathcal{P},\mathcal{Q},\mathcal{O}:[0,\infty)\times \mathbb{R}\times \mathbb{R} \rightarrow \mathbb{R}$$ by $\mathcal{P}(r,c_1,c_2)=p_{12}(r)$, and so on, which are piecewise submersions. The solution of $\mathcal{Q}$ is then an algebraic submanifold $S_p \subset \mathbb{R}^3$, possibly with boundary, that completely determines the behavior of the singular set of $\rho _{12}$ when we vary $c_1$ and $c_2$. Similarly, the disjoint union of the solutions sets of $\mathcal{P}$ and $\mathcal{O}-1$ also defines an algebraic submanifold $S_{po}$ of $\mathbb{R}^3$ which determines the behavior of the cavities when we vary $c_1$ and $c_2$.

Notice that a point $(r,c_1,c_2)\in S_q$ is a critical singularity of $\rho_{12}$ iff it admits topologically distinct neighborhoods. Analogously, the critical cavities are given by points in $S_{po}$ with non-homeomorphic neighborhoods. Finally, the critical configurations of the stellar system with density $\rho_{12}$ are the points of $S_q \sqcup S_{po}$ which are critical singularities or critical cavities. A manifold defined by the inverse image of a function is locally homeomorphic to the graph of the defining function. This means that a neighborhood for $(r,c_1,c_2)$ is just a piece of the graph of $\mathcal{P}$, $\mathcal{Q}$ or $\mathcal{O}-1$. Since the topology of a graph changes only at an asymptote, zeros of $Q$ with fixed $(c_1,c_2)$ give the singularities, while zeros with fixed $(r,c_2)$ and $(r,c_1)$ will give the critical singularities, and similarly for critical cavities.

Having obtained singularities, cavities and critical configurations, the classification is completed by determining the kind of matter, which can be done from graphical analysis. In next sections we will apply this strategy to rows 1, 2 and 7 of Table \ref{table_appendix}. The row 1 will produce the schematic drawings (\ref{draw1}) and  (\ref{draw2}), while row 2 will produce (\ref{draw3}) and (\ref{draw4}), and row 7 will give (\ref{draw5}) and (\ref{draw6}).

\subsection{Row 1 of Table \ref{table_appendix}}

In this case the density function in given by
\begin{equation}\label{rho_1_2}
\rho(r,c_1,c_2)=\frac{c_2 \left(c_1-5 \pi  r^2\right)}{4 \pi  r^4 \left(\pi  r^2-c_1\right){}^3}.
\end{equation}
Notice that $c_2$ is a multiplicative constant, so that it will give linearly dependent solutions, leading us to fix $c_2=1$. This means that the singular set $S_q$ is a submanifold of $\mathbb{R}^2$, while the set of cavities $S_{po}$ is a submanifold of $\mathbb{R}^3$ of the form $S\times \mathbb{R}$, with  $S\subset \mathbb{R}^2$.
More explicitly, singularities are given by the solutions of the algebraic equation
\begin{equation}
4\pi r^4(\pi r^2 - c_1)^3 = 0,
\label{singularities_1}
\end{equation}
while cavities are determined by the solutions of
\begin{equation}
c_1 - 5\pi r^2 = 0
\label{cavities_1}
\end{equation}
Both equations depend explicitly of $c_1$. If $c_1 \leq 0$, the only singularity radius is the origin $r = 0$ and there are no cavities. An example of this behavior is given by Figure \ref{graph1}, for $c_1=0$:

\begin{figure}[h]
 \centering
 \includegraphics[scale=0.5]{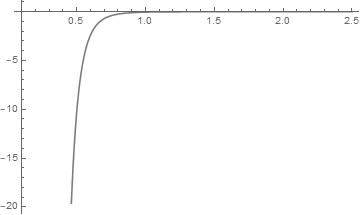}
    \caption{Plot of (\ref{rho_1_2}) for $c_1 = 0$ and $c_2 = 1$.}
    \label{graph1}
\end{figure}

It then follows that, for our choice $c_2=1$, the stellar system is composed only by exotic matter. On the other hand, if $c_1 > 0$, the singularities happen at $r = 0$ and at $r_0 =\sqrt{c_1/\pi}$, which is the non-negative solution of $\pi r^2 - c_1 = 0$.
The single cavity is given by the single positive root $r_1 = \sqrt{c_1/5\pi}$ of (\ref{cavities_1}), which lies in the interval $(0,r_0)$ bounded by the singularities. Thus, the type of matter inside $(0,r_0)$ may change, but it remains the same after crossing $r_0$. In order to capture this change of matter we analyze the sign of the derivative of $\rho _{12}$ at $r_1$. The derivative is
\begin{equation}
\rho _{12}'(r) =  \frac{c_2 \left(c_1^2-5 c_1\pi  r^2 + 10\pi^2 r^4\right)}{\pi  r^5 \left(\pi  r^2-c_1\right)^{4}}
\end{equation}
and we see that  $\rho _{12}'(r_1) > 0$. So, in that point, the star matter stops being exotic and becomes ordinary. Moreover, we have $\lim_{r\to r_{0+}} \rho _{12}(r) = +\infty$ and $\lim_{r\to r_{0-}} \rho _{12}(r) = -\infty$. It follows that in $(r_0,\infty)$ the star is composed of exotic matter. An example of this behavior is given by Figure \ref{graph2}, where $c_1=7$:

\begin{figure}[h]
    \centering
    \includegraphics[scale=0.5]{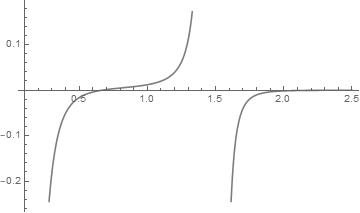}
    \caption{Plot of \ref{rho_1_2} for $c_1 = 7$ and $c_2 = 1$.}
    \label{graph2}
\end{figure}

Now, solving (\ref{cavities_1}) and (\ref{singularities_1}) we see that for each given $r_0$ there exists a single $c_1$ making $r_0$ a critical configuration.

\subsection{Row 2 of Table \ref{table_appendix}}

We start by noticing that in the present case the density function can be written as
\begin{equation}
\rho_{12}(r) = -\frac{\Gamma_{12}(r)}{8 \pi  r^4 \left(r^2-1\right) \left(-c_1+\pi  r^2+\pi  \log \left(r^2-1\right)\right){}^3}
\label{h'rho}
\end{equation}
where
\begin{align}
\Gamma_{12}(r) = &-2 c_1 c_2 \left(r^2-1\right)+3 \pi ^2 \left(r^2-1\right) \left(\pi  r^2-c_1\right) \notag \\
&\times \left[\log \left(r^2-1\right)\right]^2 - 2\pi [c_2 \left(1-5 r^2\right) r^2 \notag \\
&+ c_1^2 \left(r^4+2 r^2-1\right)] + \pi  \left(r^2-1\right) \notag \\
&\times \left[2 \pi  c_1 \left(1-3 r^2\right)+2 \left(c_1^2+c_2\right) +\pi ^2 \left(3 r^4-1\right)\right] \notag \\
&\times \log \left(r^2-1\right)-\pi ^2 c_1 \left(3 r^6-13 r^4+r^2+1\right) \notag \\
&+ \pi ^3 \left(r^2-1\right) \left[\log \left(r^2-1\right)\right]^3 \notag \\
&+\pi ^3 \left(r^8-r^6-5 r^4+r^2\right).
\label{h'gamma}
\end{align}
The function $\rho_{12}$ depends non-trivially on both variables $c_1$ and $c_2$ only in $\Gamma_{12}$, so that a priori its singular set and its set of zeros are arbitrary submanifolds of $\mathbb{R}^3$ and $\mathbb{R}^2$, respectively, and therefore we may expect many critical configurations.
The interval $[0,1]$ is clearly singular, while the singularities for $r>1$ are determined by solutions of the equation
\[
-c_1+\pi  r^2+\pi  \log \left(r^2-1\right) = 0.
\]
We assert that for each fixed $c_1$ there is precisely one solution. In other words, we assert that the singular set is diffeomorphic to $([0,1]\times \mathbb{R})\sqcup \mathbb{R}$.  Indeed, let $Y:[0,\infty) \times \mathbb{R} \rightarrow \mathbb{R}$ be the piecewise differentiable function
\begin{equation}
Y(r,c_1) = -c_1+\pi  r^2+\pi  \log \left(r^2-1\right).
\end{equation}
Clearly, there are $r_{0}$ and $r_{1}$ such that $Y(r_{0},c_1) < 0$ and $Y(r_{1},c_1) > 0$. But, the derivative of $Y(r,c_1)$ in the direction of $r$ is always positive for $r>1$. If fact, if $r>1$, then
\[
\frac{\partial Y}{\partial r}(r,c_1) = \frac{2r(r^2+\pi -1)}{r^2-1} > 0.
\]
Consequently, $Y$ has a single zero in $r>1$ corresponding to the unique singularity of $\rho _{12}$ in this interval. Moreover, for any $r_0 > 1$, we can set the constant $c_1$ appropriately so that $r_0$ corresponds the singularity. In fact, just set
\[
c_1 = \pi  r_0^2+\pi  \log \left(r_0^2-1\right).
\]

The cavities of $\rho_{12}$ in (\ref{h'rho}) are given by zeros of $\Gamma_{12}$. Differently of the previous case, we cannot isolate any of the variables, so that $S_{po}\subset \mathbb{R}^3$ will have complicated topology. We assert that the manifold of critical configurations is $\mathbb{R}^2$. Indeed, if we fix $c_1$ and vary $c_2$ we do not find any change of topology, while if we fix $c_2$, say $c_2=1$, we find two critical points, approximately at $c_1 = 1,7$ and at $c_1=4,6$, as shown in Figure \ref{h'figures}.

\begin{figure}[h]	
	\centering
	\begin{subfigure}[h]{0.3\textwidth}
		\centering
		\includegraphics[width=1.3in]{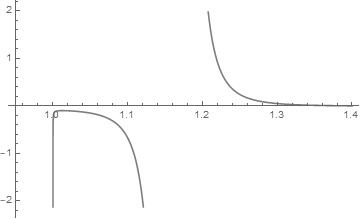}
		\caption{$c_1 = 1$}
        \label{graph3}		
	\end{subfigure}
	\quad
	\begin{subfigure}[h]{0.3\textwidth}
		\centering
		\includegraphics[width=1.3in]{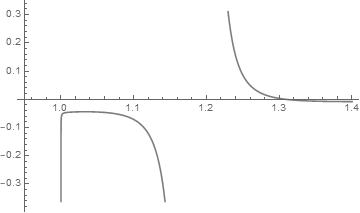}
		\caption{$c_1 = 1.6$}
        \label{graph4}	
	\end{subfigure} \\
    	\begin{subfigure}[h]{0.3\textwidth}
		\centering
		\includegraphics[width=1.3in]{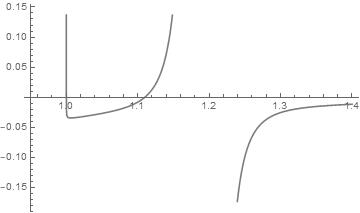}
		\caption{$c_1 = 1.8$}
        \label{graph5}			
	\end{subfigure}
	\quad
	\begin{subfigure}[h]{0.3\textwidth}
		\centering
		\includegraphics[width=1.3in]{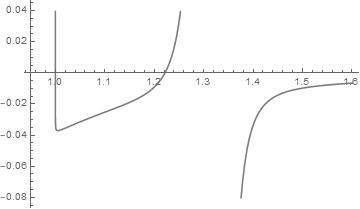}
		\caption{$c_1 = 4.5$}
        \label{graph6}	
	\end{subfigure}
	\quad
	\begin{subfigure}[h]{0.3\textwidth}
		\centering
		\includegraphics[width=1.3in]{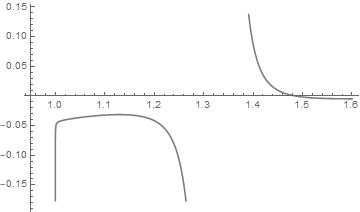}
		\caption{$c_1 = 4.7$}
        \label{graph7}	
	\end{subfigure}
    \caption{Plot of (\ref{h'rho}) for different values of $c_1$ and constant $c_2 = 1$.}
    \label{h'figures}
\end{figure}

\subsection{Row 7 of Table \ref{table_appendix}}

Previously we studied a stellar system with discrete critical configurations and another with a continuum of critical configurations. In both cases, graphical analysis were used to identify the critical configurations. Here we will discuss a third system, whose special feature is to show that even when we have an analytic expression for cavities and singularities, a graphical analysis is fundamental to complete the classification.

The density function is now of the form

\begin{equation}
\rho(r,c_1,c_2) = f(r,c_1,c_2)/q(r,c_1),
\label{h2rho2}
\end{equation}
where $f$ is a linear combination of polynomials and logarithmic functions, depending on both parameters $c_1$ and $c_2$. We will focus on singularities, so that only the expression of $q$ matters. It is the polynomial
\[
q(r,c_1) = 12\pi r^2(-c_1r + \pi r^3 +1)^3.
\]

Because $c_2$ does not affect $q$ we see that the singular set is a submanifold of $\mathbb{R}^2$. For each fixed $c_1$ the complex roots of $q$ can be written analytically with help of some software (we used Mathematica$\circledR$). They are $r_0=0$, with multiplicity two, and
\begin{equation}
r_1 = \frac{\sqrt[3]{2} \left(\sqrt{81 \pi -12 c_1^3}-9 \sqrt{\pi }\right){}^{2/3}+2 \sqrt[3]{3} c_1}{6^{2/3} \sqrt{\pi } \sqrt[3]{\sqrt{81 \pi -12 c_1^3}-9 \sqrt{\pi }}}
\label{root1}
\end{equation}
\begin{equation}
r_2 = \frac{\sqrt[3]{2} \sqrt[6]{3} \left(-1+i \sqrt{3}\right) \left(\sqrt{81 \pi -12 c_1^3}-9 \sqrt{\pi }\right){}^{2/3}-2 \left(\sqrt{3}+3 i\right) c_1}{2\ 2^{2/3} 3^{5/6} \sqrt{\pi } \sqrt[3]{\sqrt{81 \pi -12 c_1^3}-9 \sqrt{\pi }}}
\label{root2}
\end{equation}
\begin{equation}
r_3 = \frac{\sqrt[3]{2} \sqrt[6]{3} \left(-1-i \sqrt{3}\right) \left(\sqrt{81 \pi -12 c_1^3}-9 \sqrt{\pi }\right){}^{2/3}-2 \left(\sqrt{3}-3 i\right)c_1}{2\ 2^{2/3} 3^{5/6} \sqrt{\pi } \sqrt[3]{\sqrt{81 \pi -12 c_1^3}-9 \sqrt{\pi }}},
\label{root3}
\end{equation}
each of them with multiplicity three. Our stellar system then have singularities only for the values $c_1$ such that some of the radii above are real and non-negative. One can be misled to think that $r_1$ corresponds to a real zero iff $c_1 \leq \sqrt[3]{\frac{81\pi}{12}} \approx 2.77$. Although $c_1 \approx 2.77$ is clearly a critical configuration, a graphical and numerical analysis shows that $r_1$ is always real. In order to see this, we compare in Figure \ref{h2figures} the density function for $c_1 = 5 > 2.77$ and $c_1 = 1 < 2.77$.

\begin{figure}[h]	
	\centering
	\begin{subfigure}[h]{0.3\textwidth}
		\centering
		\includegraphics[width=1.5in]{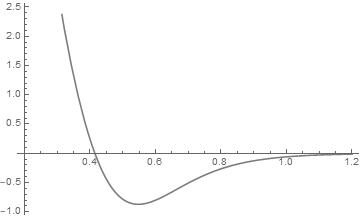}
		\caption{$c_1 = 1$}
        \label{graph8}		
	\end{subfigure}
	\quad
	\begin{subfigure}[h]{0.3\textwidth}
		\centering
		\includegraphics[width=1.5in]{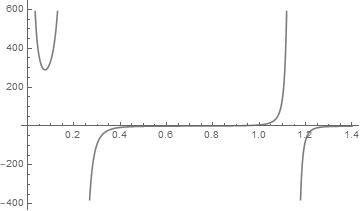}
		\caption{$c_1 = 5$}
        \label{graph9}	
	\end{subfigure}
    \caption{Plot of (\ref{h2rho2}) of $c_1$ and constant $c_2 = 1$.}
    \label{h2figures}
\end{figure}

Although Figure (\ref{graph8}) seems physically interesting, it is not:  a numerical evaluation shows that $r_1(c_1)  \approx -0.84$, so that despite being real, it is negative. In turn, Figure (\ref{graph9}) should appear strange: it contains two nonzero real roots while the analytic expressions above suggest that there is at most one non-null real root. Again, a numerical evaluation gives $r_1(c_1 = 5) \approx 1.15$, $r_2(c_1 = 5) \approx -1.35 + 1.69\times 10^{-21}i$ and $r_3(c_1 = 5) \approx 0.21 - 4.24\times 10^{-22}i$. So, we see that when $c_1$ grows, the imaginary parts of the roots $r_2$ and $r_3$ become increasingly smaller, allowing us to discard them. Thus, $c_1\approx 2.77$ determines a critical configuration such that for $c_1<2.77$ there are no singularities and for $c_1>2.77$ there are two of them.

\section{Summary} \label{concluding_remarks}
In this paper we considered the problem of classifying the stellar systems modeled by the piecewise differential TOV equation (\ref{tov1}). We began by introducing the problem in the general context presented in \cite{TOV2}, allowing us to formalize the problem as the determination of the structure of $\mathrm{TOV}^{k,l}_I$ as a subspace of $C^{k,l,l+1}_{pw}(I)$ relative to some locally convex topology. We showed that this subspace is generally not open if the topology is Hausdorff. We introduced another subspace $\mathrm{Psd}^{l+1}(I) \subset C^{k,l,l+1}_{pw}(I)$ of pseudo-asymptotic systems and we showed that for $l=\infty$ this space is at least fifteen-dimensional and that there are good reasons to believe that its dimension is larger. We then considered extended TOV systems, which are TOV systems defined on the extended real line $\mathbb{R}$, and we showed that if a pseudo-asymptotic systems has a nice behavior when extended to $\mathbb{R}$, then it is actually a extended TOV system, leading us to conclude that the space $\mathrm{\overline{\overline{TOV}}}^{\infty,\infty}$ of all of them is at least fifteen-dimensional. We presented a method of classification of these solutions, applying it to some of them, allowing us to show that there are new pseudo-asymptotic TOV systems  consisting only of ordinary matter and that contain no cavities or singularities.

\appendix
\section{Table of Solutions} \label{appendix}

\begin{table}[H]
\scalefont{0.6}

\begin{tabular}{c|c|l}

Function $F(r)$ (Eq. \ref{int}) & $h(r)$ & $\rho(r)$ \\ \hline

$0$ & $\frac{1}{\pi  r^2-c_1}$ & $\frac{c_2 \left(c_1-5 \pi  r^2\right)}{4 \pi  r^4 \left(\pi  r^2-c_1\right){}^3}$ \\ \hline

\multirow{6}{*}{$h'(r)$}  & \multirow{6}{*}{$\frac{1}{-c_1+\pi  r^2+\pi  \log \left(r^2-1\right)}$} & $[ -2 c_1 c_2 \left(r^2-1\right)+3 \pi ^2 \left(r^2-1\right) \left(\pi  r^2-c_1\right) \left[\log \left(r^2-1\right)\right]^2$ \\
& & $-2 \pi  \left(c_2 \left(1-5 r^2\right) r^2+c_1^2 \left(r^4+2 r^2-1\right)\right) + \pi  \left(r^2-1\right)$ \\
& & $\times \left(2 \pi  c_1 \left(1-3 r^2\right) + 2 \left(c_1^2+c_2\right)+\pi ^2 \left(3 r^4-1\right)\right) \log \left(r^2-1\right)$ \\
& & $-\pi ^2 c_1 \left(3 r^6-13 r^4+r^2+1\right)+\pi ^3 \left(r^2-1\right) \left[\log \left(r^2-1\right)\right]^3$ \\
& & $+\pi ^3 \left(r^8-r^6-5 r^4+r^2\right) ]:[ 8 \pi  r^4 \left(r^2-1\right)$ \\
& & $\times \left(-c_1+\pi  r^2+\pi  \log \left(r^2-1\right)\right){}^3 ]$ \\ \hline

\multirow{9}{*}{$rh'(r)$} & \multirow{9}{*}{$\frac{1}{-c_1+\pi  r^2+2 \pi  r+2 \pi  \log (r-1)-3 \pi }$} & $[ 60 \pi ^2 (r-1) \left(9 c_1+\pi  \left(8 r^3-9 r^2-18 r+31\right)\right) \log ^2(r-1)$ \\
& & $+30 \pi  \left(c_1^2 \left(4 r^4-r^3-3 r^2+17 r-11\right)-3 c_2 \left(5 r^3+r^2-5 r+3\right)\right)$ \\
& & $+2 \pi  (r-1) [-30 \pi  c_1 \left(8 r^3-9 r^2-18 r+31\right)-90 \left(c_1^2+c_2\right)$ \\
& & $+\pi ^2 \left(24 r^5+135 r^4-1460 r^3+390 r^2+1860 r-1669\right)]$ \\
& & $\times \log (r-1)-\pi ^2 c_1 [24 r^6+111 r^5-1595 r^4+1850 r^3+1470 r^2$ \\
& & $-3529 r+1669]+90 c_1 c_2 (r-1)+\pi ^3 [9 r^7-115 r^6-803 r^5$ \\
& & $+4465 r^4-3365 r^3-3529 r^2+5375 r-2037]$ \\
& & $-360 \pi ^3 (r-1) \log ^3(r-1)]:[360 \pi  (r-1) r^4 [-c_1+\pi  \left(r^2+2 r-3\right)$ \\
& & $+2 \pi  \log (r-1)]{}^3]$ \\ \hline

$h^2(r)$ & $\frac{r}{-c_1 r+\pi  r^3+1}$ & $\frac{-\pi  r^3 \left(10 c_1 r+15 c_2+7\right)+3 \left(-c_1 r+5 \pi  r^3-1\right) \log (r)+3 \left(c_1 \left(c_2+3\right) r+c_2-1\right)+2 \pi ^2 r^6}{12 \pi  r^2 \left(-c_1 r+\pi  r^3+1\right){}^3}$ \\ \hline

\multirow{3}{*}{$rh^2(r)$} & \multirow{3}{*}{$\frac{1}{-c_1+\pi  r^2-\log (r)}$} & $[-3 \pi  \left(c_1-1\right) r^4-\left(2 c_1^2-3 c_1+20 \pi  c_2+2\right) r^2$ \\
& & $+\left(\left(3-4 c_1\right) r^2+4 c_2-3 \pi  r^4\right) \log (r)+4 \left(c_1+2\right) c_2+\pi ^2 r^6$ \\
& & $-2 r^2 \log ^2(r)]:[-16 \pi  r^4 \left(c_1-\pi  r^2+\log (r)\right){}^3]$ \\ \hline

$r^2h^2(r)$ & $\frac{1}{-c_1+\pi  r^2-r}$ & $-\frac{\left(8 \pi  c_1-15\right) r^5-45 c_1 r^4-40 c_1^2 r^3+300 \pi  c_2 r^2-180 c_2 r-60 c_1 c_2+9 \pi  r^6}{240 \pi  r^4 \left(c_1-\pi  r^2+r\right){}^3}$ \\ \hline

$r^3h^2(r)$ & $\frac{2}{-2 c_1+2 \pi  r^2-r^2}$ & $\frac{7 (2 \pi -1) c_1 r^6-18 c_1^2 r^4+15 (1-2 \pi ) c_2 r^2+6 c_1 c_2-(1-2 \pi )^2 r^8}{12 \pi  r^4 \left((2 \pi -1) r^2-2 c_1\right){}^3}$ \\ \hline

\multirow{3}{*}{$\frac{h^2(r)}{r}$} & \multirow{3}{*}{$\frac{2 r^2}{-2 c_1 r^2+2 \pi  r^4+1}$} & $[\left(-2 c_1 r^2+2 \pi  r^4+1\right) \left(4 c_1 r^2+3 c_2 r^2+12 c_1 r^2 \log (r)-10 \pi  r^4+1\right)$ \\
& & $-2 \left(8 \pi  r^3-4 c_1 r\right) \left(c_2 r^3+4 c_1 r^3 \log (r)-2 \pi  r^5+r\right)]$ \\
& & $:[4 \pi  r^2 \left(-2 c_1 r^2+2 \pi  r^4+1\right){}^3]$ \\ \hline

$r^{1/2}h^2(r)$ & $\frac{\sqrt{r}}{-c_1 \sqrt{r}+\pi  r^{5/2}+2}$ & $\frac{-7 c_1^2 r^{3/2}-34 \pi  c_1 r^{7/2}-105 \pi  c_2 r^2+42 c_1 r+21 c_1 c_2+9 \pi ^2 r^{11/2}+126 \pi  r^3-84 \sqrt{r}}{84 \pi  r^{5/2} \left(-c_1 \sqrt{r}+\pi  r^{5/2}+2\right){}^3}$ \\ \hline

$r^{3/4}h^2(r)$ & $\frac{\sqrt[4]{r}}{-c_1 \sqrt[4]{r}+\pi  r^{9/4}+4}$ & $[440 c_1 r^{7/4}-525 \pi  c_2 r^{9/4}-118 \pi  c_1 r^4-45 c_1^2 r^2+105 c_1 c_2 \sqrt[4]{r}$ \\
& & $-210 c_2-1120 r^{3/2}+616 \pi  r^{15/4}+35 \pi ^2 r^6]$ \\
& & $:[420 \pi  r^{7/2} \left(-c_1 \sqrt[4]{r}+\pi  r^{9/4}+4\right){}^3]$ \\ \hline

\end{tabular}
\caption{ \label{table_appendix} Solutions of the coupling equation (\ref{nolinear}) in the pseudo-asymptotic limit.}
\label{solutions}
\end{table}

\section{Graphs of some of the $\Lambda_1$ associated to Table \ref{table_appendix}} \label{appendix_2}

\begin{figure}[H]	
	\flushleft
	\begin{subfigure}[h]{0.25\textwidth}
		\centering
		\includegraphics[scale=0.15]{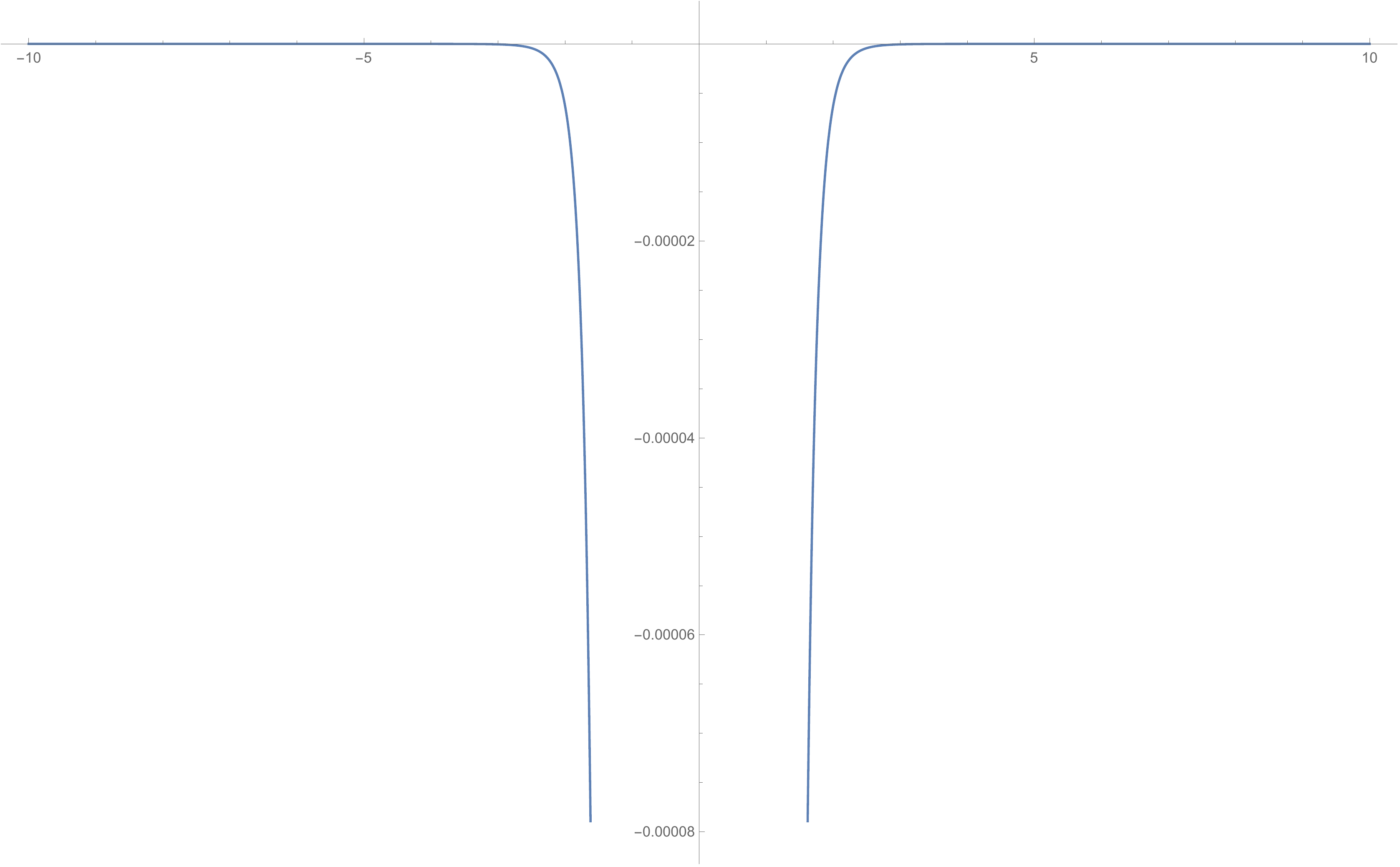}
		\caption{\centering $\Lambda _1$ for the first row}
	\end{subfigure}
	\quad\quad
	\begin{subfigure}[H]{0.25\textwidth}
	\centering
    \includegraphics[scale=0.20]{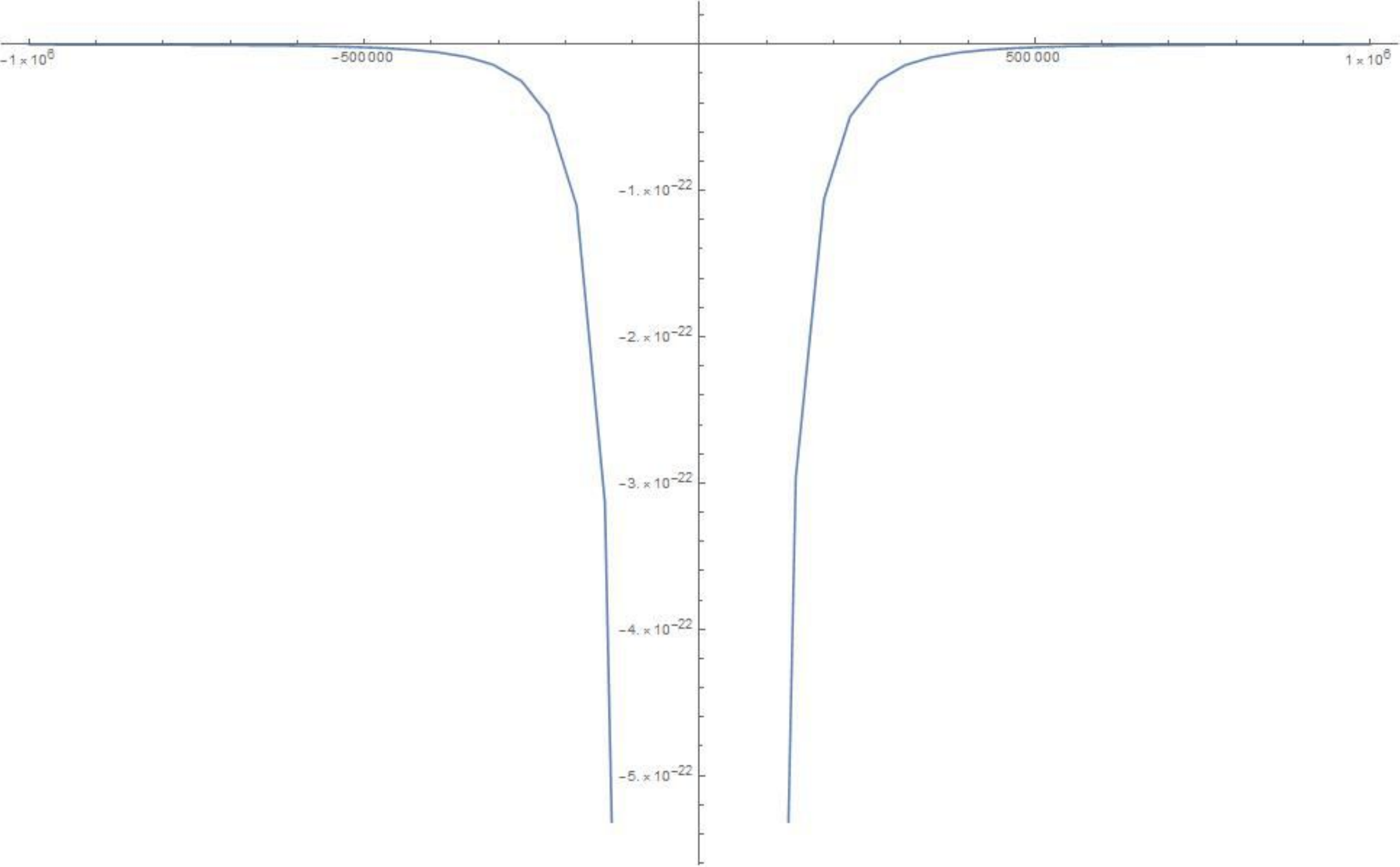}
    \caption{\centering $\Lambda _1$ for the second row}
    \end{subfigure}
    \begin{subfigure}[H]{0.25\textwidth}
	\centering
    \includegraphics[scale=0.20]{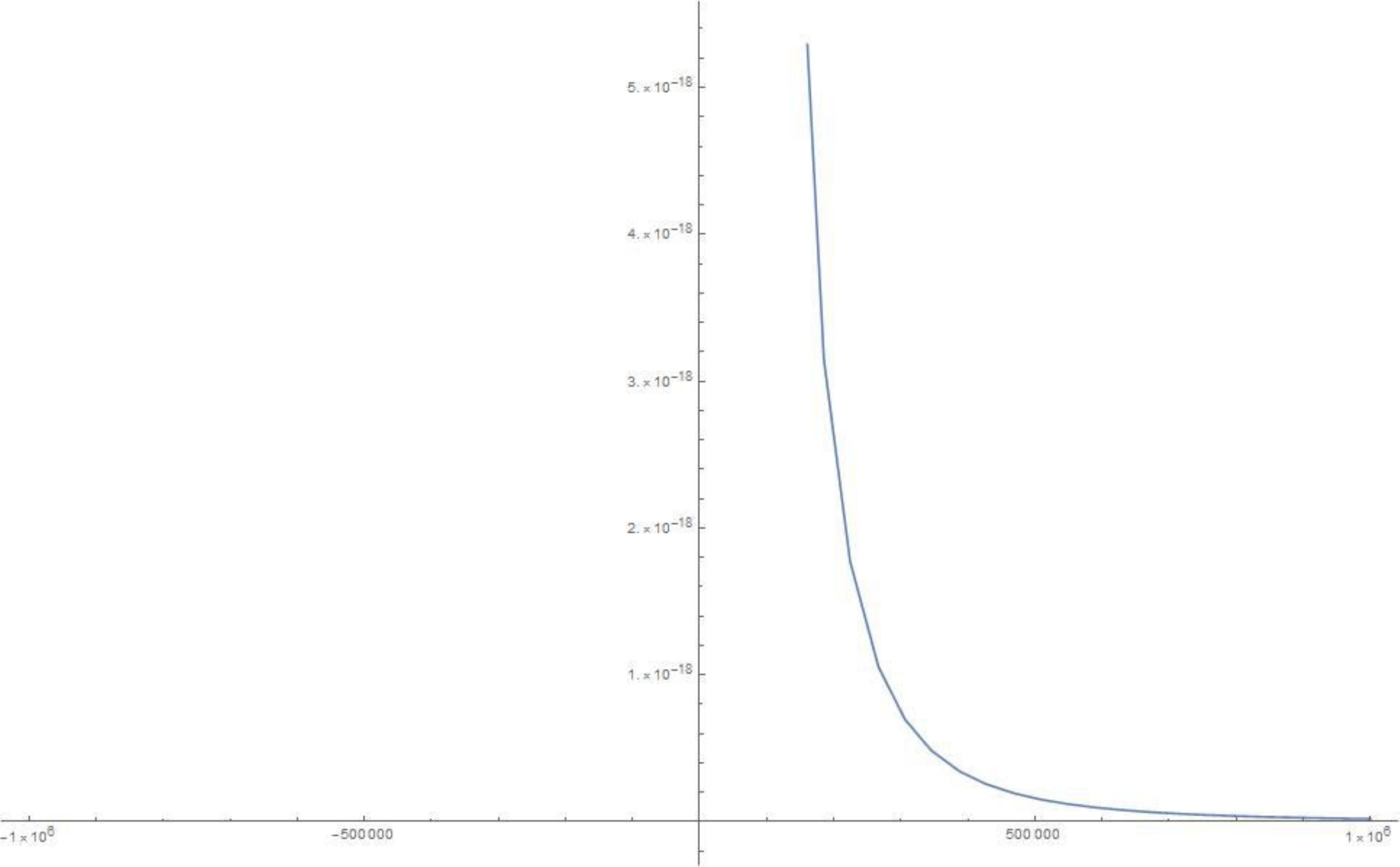}
    \caption{\centering $\Lambda _1$ for the third row}
    \end{subfigure}
    \\
	\begin{subfigure}[H]{0.25\textwidth}
	\centering
	\includegraphics[scale=0.20]{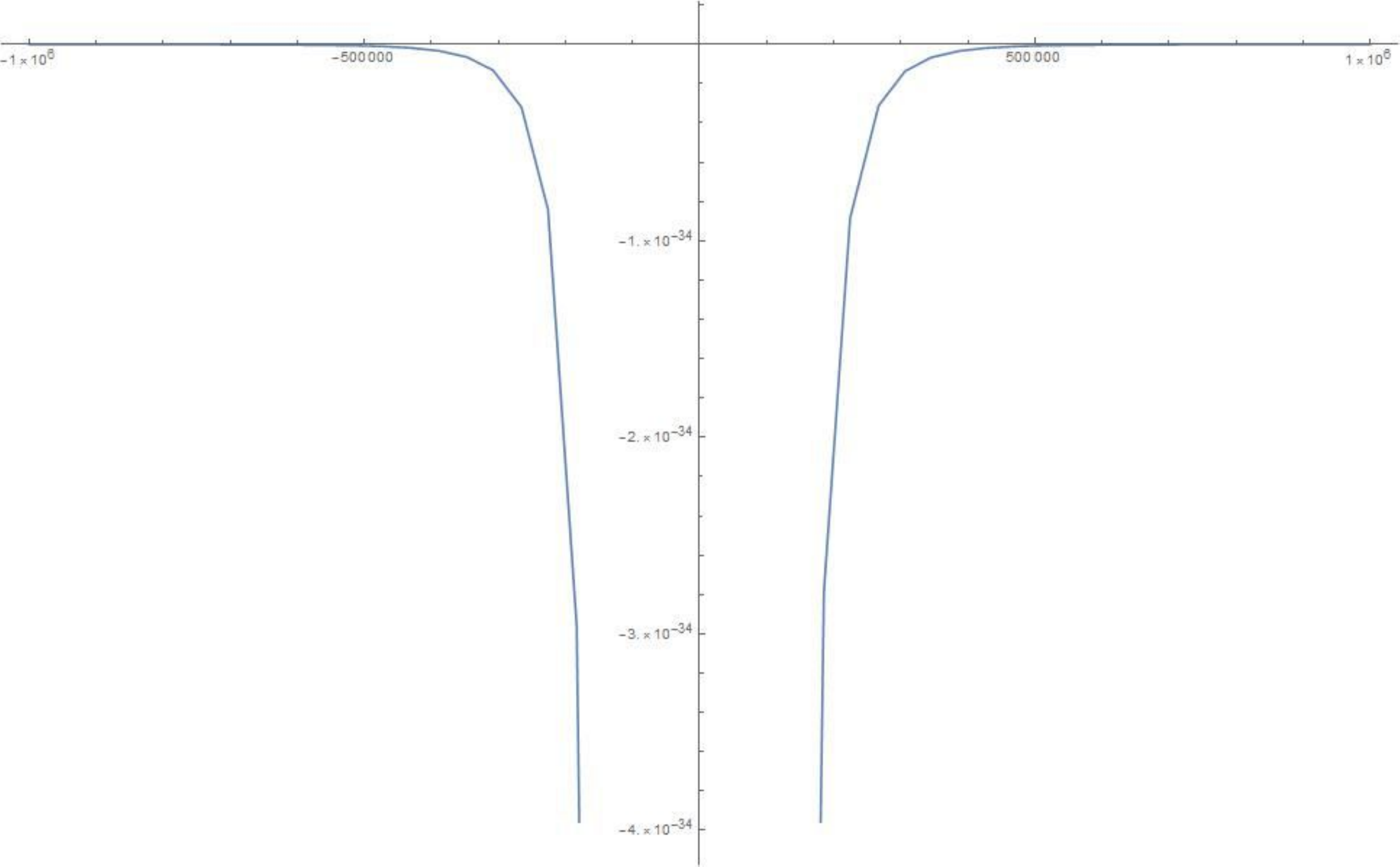}
	\caption{\centering $\Lambda _1$ for the fourth row}
	\end{subfigure}
	\quad\quad
    \begin{subfigure}[H]{0.25\textwidth}
	\centering
	\includegraphics[scale=0.20]{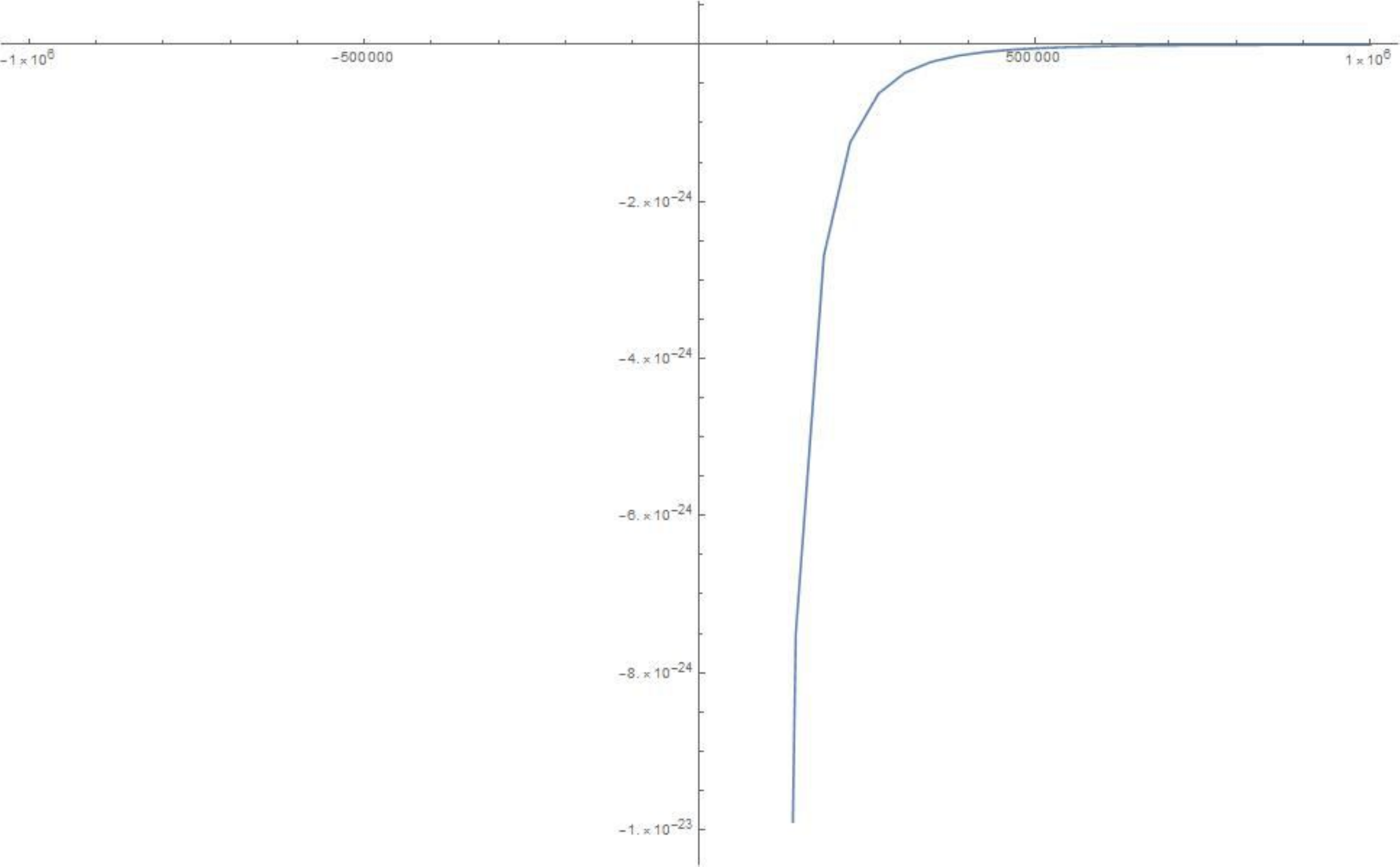}
	\caption{\centering $\Lambda _1$ for the fifth row}
	\end{subfigure}
	\begin{subfigure}[H]{0.25\textwidth}
	\centering
	\includegraphics[scale=0.20]{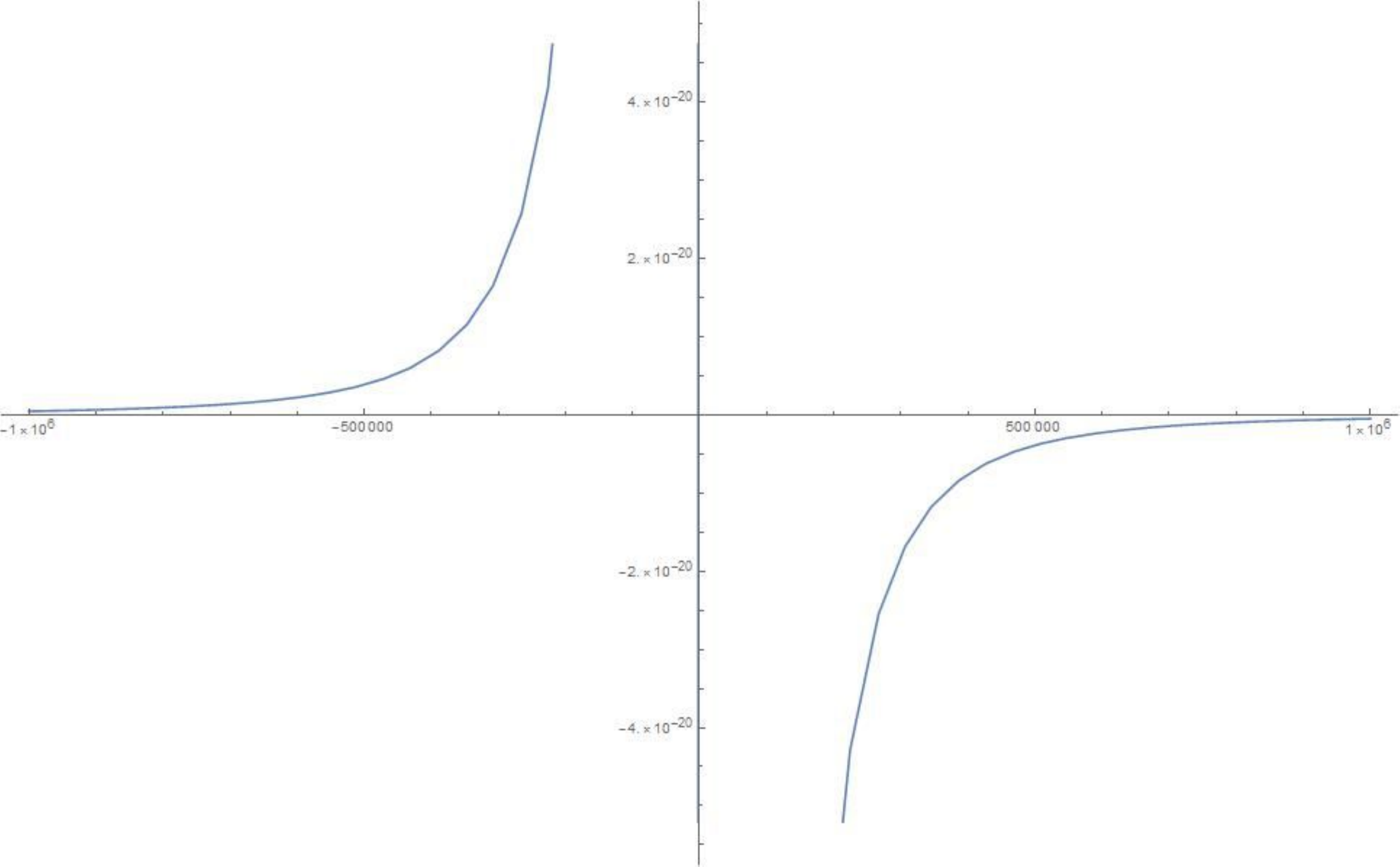}
	\caption{\centering $\Lambda _1$ for the sixth row}
	\end{subfigure}
	\\
	\begin{subfigure}[h]{0.25\textwidth}
	\centering
	\includegraphics[scale=0.20]{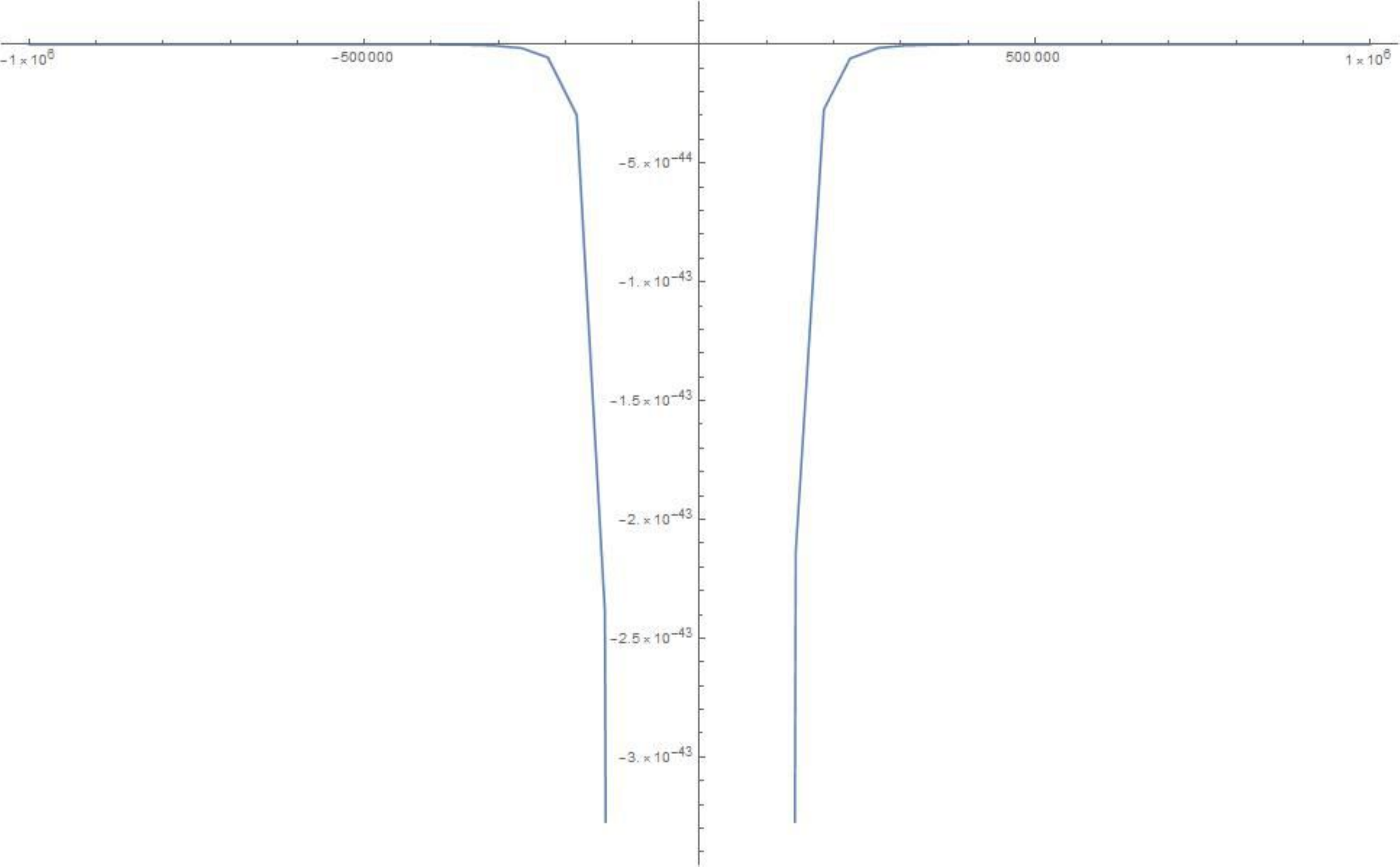}
	\caption{\centering $\Lambda _1$ for the eighth row}
    \label{draw5}		
	\end{subfigure}
	\quad\quad
	\begin{subfigure}[H]{0.25\textwidth}
	\centering
	\includegraphics[scale=0.20]{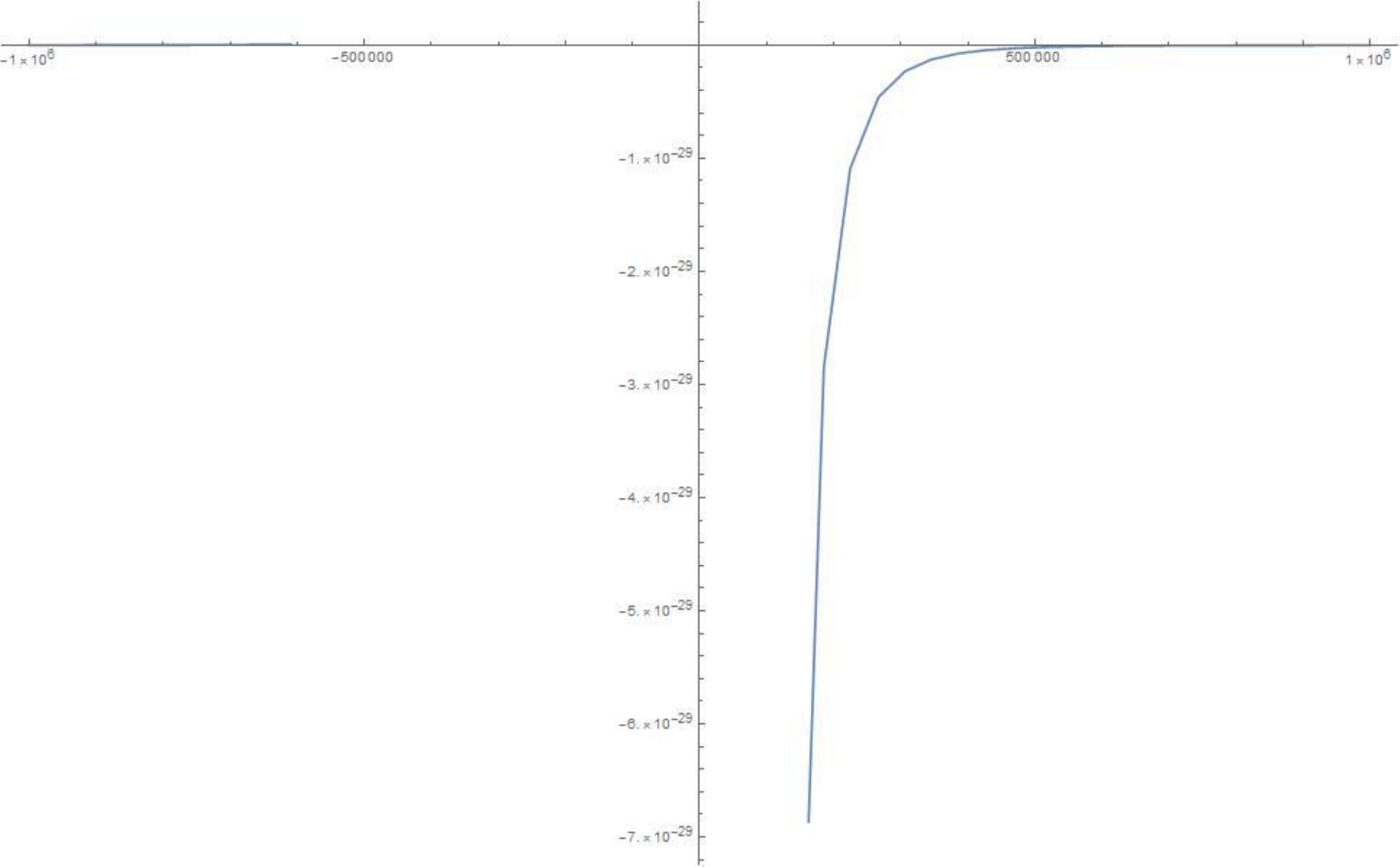}
	\caption{\centering $\Lambda _1$ for the ninth row}
	\end{subfigure}
	\begin{subfigure}[H]{0.25\textwidth}
	\centering
	\includegraphics[scale=0.20]{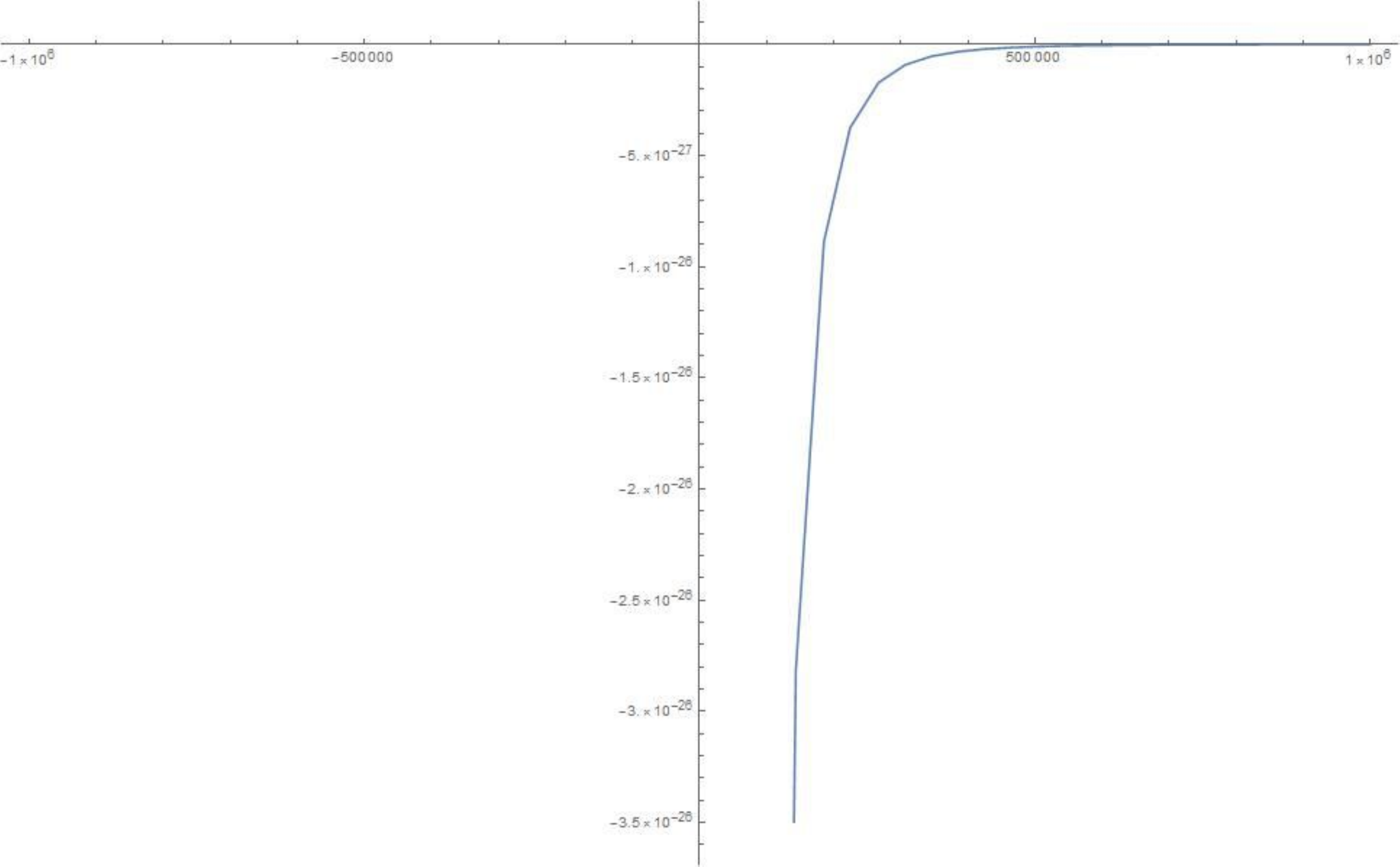}
	\caption{\centering $\Lambda _1$ for the tenth row}
	\end{subfigure}
    \caption{Graphs of some of the functions $\Lambda _1$ associated with the rows of Table \ref{table_appendix}}
\end{figure}

\section*{Acknowledgements} Y. X. Martins and L. F. A. Campos were supported by CAPES and CNPq, respectively. The authors thank the referee for the careful reading and for clarifying some mistakes.

\end{document}